\documentclass[a4paper]{article}
\usepackage{amsmath,amsthm,amssymb,enumerate,hyperref,bm,color,eurosym}
\usepackage[legacycolonsymbols]{mathtools}
\usepackage[ruled,linesnumbered]{algorithm2e}
\providecommand\given{}
\newcommand\SetSymbol[1][]{%
  \nonscript\:#1\vert
  \allowbreak
  \nonscript\:
  \mathopen{}}
\DeclarePairedDelimiterX\Set[1]{\{}{\}}{%
  \renewcommand\given{\SetSymbol[\delimsize]}
  #1}

\DeclarePairedDelimiterXPP\pospart[1]{}{(}{)}{^+}{#1}
\DeclarePairedDelimiterXPP\negpart[1]{}{(}{)}{^-}{#1}

\newcommand\R{\mathbb{R}}

\newcommand\A{\mathcal{A}}

\newcommand\GG{\mathcal{G}}

\newtheorem{thm}{Theorem}[section]
\newtheorem{prop}[thm]{Proposition}
\newtheorem{cor}[thm]{Corollary}
\theoremstyle{definition}

\newtheorem{ex}[thm]{Example}

\DeclareMathOperator*{\optval}{opt\,val}

\DeclareMathOperator*{\Int}{int}

\DeclareMathOperator*{\gr}{gr}
\DeclareMathOperator*{\cl}{cl}
\DeclareMathOperator*{\conv}{conv}

\DeclareMathOperator*{\dom}{dom}

\DeclareMathOperator*{\opt}{opt}
\DeclareMathOperator*{\netflow}{net\,flow}

\makeatletter
\newcommand{\leqnomode}{\tagsleft@true\let\veqno\@@leqno}
\newcommand{\reqnomode}{\tagsleft@false\let\veqno\@@eqno}
\makeatother

\usepackage{booktabs}
\usepackage{tikz}

\usetikzlibrary{positioning}

\definecolor{color1}{rgb}{0,.2,.8}
\definecolor{color2}{rgb}{1,.2,0}
\definecolor{color3}{rgb}{.2,.7,.6}

\bibliographystyle{abbrv}

\title{A Decision-Making Method in Polyhedral Convex Set Optimization}
\author{Andreas L{\"o}hne\thanks{Friedrich Schiller University Jena, Germany, andreas.loehne@uni-jena.de}}

\begin{document}
\maketitle

\begin{abstract} \noindent
Optimization problems with set-valued objective functions arise in contexts such as multi-stage optimization with vector-valued objectives. The aim is to identify an optimizer—a feasible point with an optimal objective value—based on an ordering relation on a family of sets. When faced with multiple optimizers, a decision maker must choose one. Visualizing the values associated with these optimizers could provide a solid basis for decision-making. However, these values are sets, making it challenging to visualize many of them. 
Therefore, we propose a method where an optimizer is selected by designing the respective outcome set through a trial-and-error process. 
In a polyhedral convex setting, we discuss an implementation and prove that an optimizer can be found using this method after a finite number of design steps. We motivate the problem setting and illustrate the process using an example: a two-stage bi-objective network flow problem.

\medskip
\noindent
{\bf Keywords:} set linear programming, set optimization, vector linear programming, multi-objective linear programming, multi-stage optimization, optimizer design
\medskip

\noindent{\bf MSC 2020 Classification: 90C29, 90B50, 90B10}

\end{abstract}

\section{Introduction}
\label{sec:intro}

Let $F: \mathbb{R}^n \rightrightarrows \mathbb{R}^q$ be a polyhedral convex set-valued map, meaning that there exists a convex polyhedron $\text{gr}\, F$, which will be assumed to be nonempty throughout this paper, and is called the \emph{graph} of $F$,
such that for all $x \in \R^n$, the values of $F$ can be expressed as
$$F(x) = \Set{y \in \R^q \given (x,y) \in \gr F}.$$
The goal is to find some $\bar x \in \R^n$ which is minimal with respect to one of the two partial order relations on the power set of $\R^q$: set inclusion $\subseteq$ and inverse set inclusion $\supseteq$. In the first case, we seek $\bar x \in \R^n$ such that there is no $x \in \R^n$ with $F(x) \subsetneq F(\bar x)$. This case has applications in robust multi-objective optimization \cite{EhrIdeSch14,robust} and game theory with multi-dimensional payoffs \cite{HamLoe18}, but it seems to be much more difficult to handle (in particular, it is inherently non-convex in nature) than the second case, which we focus on in this paper. We define the following minimization problem:
\leqnomode
\begin{gather}\label{eq:p_min}\tag{P$_\text{min}$}
	\min_{x \in \R^n} F(x) \quad \text{ w.r.t. $\;\supseteq$}.
\end{gather}
\reqnomode
A point $\bar x \in \R^n$ is called \emph{minimizer} for \eqref{eq:p_min} if
\begin{equation}\label{eq:optimizer}
	\not\exists x \in \R^n:\; F(x) \supsetneq F(\bar x).
\end{equation}
Problem \eqref{eq:p_min} can be expressed equivalently by the a maximization problem
\leqnomode
\begin{gather}\label{eq:p_max}\tag{P$_\text{max}$}
	\max_{x \in \R^n} F(x) \quad \text{ w.r.t. $\;\subseteq$},
\end{gather}
\reqnomode
and a \emph{maximizer} for \eqref{eq:p_max} is also defined by \eqref{eq:optimizer}. Therefore, it makes sense to remove the optimization direction from the problem formulation and consider the problem
\leqnomode
\begin{gather}\label{eq:p}\tag{P}
	\opt_{x \in \R^n} F(x),
\end{gather}
\reqnomode
which is read as `optimize $F(x)$ subject to $x \in \R^n$'. An \emph{optimizer} for \eqref{eq:p} is a point $\bar x \in \R^n$ satisfying \eqref{eq:optimizer}. Note that $F(\bar x) \neq \emptyset$ for an optimizer $\bar x$, because of the running assumption $\gr F \neq \emptyset$.
 Problem \eqref{eq:p} generalizes both linear programming and multi-objective linear programming (also vector linear programming) in a way that preserves polyhedral convexity. Therefore, Problem \eqref{eq:p} is called \emph{set linear program}. Note that in the literature Problem \eqref{eq:p} in the form of \eqref{eq:p_min} or \eqref{eq:p_max} is usually called \emph{polyhedral convex set optimization problem}.

\begin{ex} \label{ex:lp} Given a linear program 
\leqnomode	
	\begin{gather}\label{eq:lp}\tag{LP}
		\min c^T x \text{ s.t. } A x \geq b,
	\end{gather}
\reqnomode
$\bar x$ is an optimal solution of \eqref{eq:lp} if and only if it is an optimizer of \eqref{eq:p} for 
$$ F(x) \coloneqq \left\{ \begin{array}{cl}
		\Set{c^T x} +\R_+ & \text{ if } A x \geq b\\
		\emptyset.        & \text{ otherwise.}
		\end{array}\right.
$$
A maximization problem can be handled similarly by replacing the non-negative real numbers $\R_+$ with the nonpositive real numbers $\R_- \coloneqq -\R_+$. The relationship to vector linear programming can be seen similarly by replacing $c^T$ by a suitable objective matrix and $\R_+$ by a polyhedral convex ordering cone, for details, see, e.g., \cite[Section 4]{Loe_natcon}.
\end{ex}

For the set linear program \eqref{eq:p}, we consider two decision makers, DM1 and DM2. DM1's task is to choose an optimizer $\bar x$ of $\eqref{eq:p}$,
while DM2 (typically later) chooses some vector $\bar y \in F(\bar x)$. A typical application is a  multi-stage
optimization problem with a vector-valued objective function $f:\R^n \times \R^k \to \R^q$. In the first stage, DM1 chooses some
$x \in \R^n$ and later DM2 chooses some $u \in \R^k$, which yields an objective value $y = f(x,u) \in \R^q$.
This situation can be described by a set-valued mapping $F : \R^n \rightrightarrows \R^q$, defined as 
$$ F(x) \coloneqq \Set{ f(x,u) \given u \in \R^k}.$$
When DM2 selects some $y \in F(x)$, there exists a corresponding $u \in \mathbb{R}^k$ such that $y = f(x,u)$. Although $u$ may not necessarily be uniquely defined, it is indirectly chosen by DM2.
Note that only if an optimizer is chosen by DM1 can DM2 have the maximum flexibility in selecting $y \in F(x)$. This means that DM1 has a ``preference for flexibility'', see \cite{Kreps79} and the discussion in \cite{HamLoe20, HamLoe24}.

Set linear programming (also called polyhedral convex set optimization) has applications in set-valued coherent risk measures introduced in \cite{JouMedTou04} and extended in \cite{HamHey10, HamHeyRud11}. The potential role of optimizers and a preference for flexibility in this field is sketched in a simple example in \cite{Loe24}. Another field of application, where optimizers and the flexibility interpretation are central, is multi-objective multi-stage stochastic linear programming \cite{HamLoe24}. Below in Section \ref{sec:ex} we introduce and discuss a further potential application in the field of energy or transportation network design.

In several papers on polyhedral convex set optimization (see, e.g., \cite{LoeSch13, HeyLoe, Loe24, HamLoe24}), a polyhedral convex ordering cone $C \subseteq \R^q$ is considered. Similarly, in more general settings, set relations involving an ordering cone \cite{Kuroiwa98} are used instead of set inclusion.
This cone typically represents the preferences of the decision makers. Although important to the decision makers, it does not play any role in the decision-support process developed in this paper. We maintain generality by assuming that
\begin{equation}\label{eq:FC}
	\forall x \in \R^n:\; F(x) = F(x) + C.
\end{equation}
 Moreover, it is not necessary to explicitly mention any linear constraints in the formulation of Problem \eqref{eq:p} as they can be subsumed in the polyhedral convex objective function. Indeed, if $S \subseteq \mathbb{R}^n$ is a polyhedral convex feasible set, intersecting the graph of $F$ with $S \times \mathbb{R}^q$ yields a new polyhedral convex set-valued map and an equivalent problem free of constraints.

The literature presents two main application-relevant approaches for optimization problems with set-valued objective mappings (e.g., \cite{HamHeyLoeRudSch15,KhaTamZal15}): the \emph{set approach}, which seeks to compute all or some optimizers without requiring infimum or supremum attainment, and the \emph{complete lattice approach}, which aims to find optimizers that attain the infimum or supremum. The present approach shares properties of both: In principle, we can compute all optimizers, but in practice, we compute only one, which is preferred by the decision maker. On the other hand, we do not require infimum or supremum attainment, though the infimum or supremum (called the optimal value here) is still part of the information presented to the decision maker.

A solution method for polyhedral convex set optimization problems based on the complete lattice approach has been developed in \cite{LoeSch13, Loe24}. Other contributions consider more general problem classes. In \cite{Jahn15}, optimizers (minimizers) are obtained from an associated vector optimization problem with infinitely many (or, in the polyhedral case, many) objectives. The recent works \cite{EicQuiRoc22,EicRoc23} provide methods for computing all weakly minimal solutions (or weak optimizers, in our terminology) for set optimization problems by solving an associated vector optimization problem. In our setting, a \emph{weak optimizer} $\bar{x} \in \R^n$ of \eqref{eq:p} is defined by the condition:
\[
    \not\exists x \in \R^n : \Int F(x) \supseteq F(\bar{x}).
\]
Clearly, a weak optimizer is not necessarily an optimizer. This shows that a weak optimizer is generally not the most flexible decision, as discussed above.

We next present a simple example that illustrates a drawback of complete-lattice-type solutions \cite{HeyLoe11,LoeSch13,HeyLoe,Loe24} for polyhedral convex set optimization problems within the context of the flexibility interpretation outlined earlier. Let us first recall this solution concept.
The concept is based on the observation that the image space of the objective function forms a complete lattice, meaning that every subset has both an infimum and a supremum. In our setting, the family $\GG$ of all closed convex subsets of $\R^q$ serves as a suitable image space. For a subset $\A$ of $\GG$, partially ordered by $\supseteq$, we have 
$$ \inf \A = \cl \conv \bigcup_{A \in \A} A \qquad \text{and} \qquad \sup_{A \in \A} \A = \bigcap_{A \in \A} A,$$
where $\cl\conv Q$ denotes the closed convex hull of a set $Q \subseteq \R^q$.
The notion of infimum is appropriate for the minimization problem \eqref{eq:p_min}, while for \eqref{eq:p_max} the same expression should be interpreted as a supremum, i.e.,
\[
\sup \A = \cl \conv \bigcup_{A \in \A} A.
\]
However, this distinction is not necessary here. To unify the treatment, we define the \emph{optimal value} of \( F \) subject to \( X \subseteq \R^n \) as
\[
\optval_{x \in X} \coloneqq \cl\conv \bigcup_{x \in X} F(x).
\]
For $X = \R^n$ (and similarly for polyhedral convex sets), we have
$$ \optval_{x \in \R^n} F(x) = \bigcup_{x \in \R^n} F(x),$$
i.e., the closed convex hull can be omitted (see, e.g., \cite[Proposition 3]{HeyLoe}). 

To simplify the exposition, we recall the solution concept only for the specific case of \emph{bounded} polyhedral convex set optimization problems. 
This means we assume that there is a finite set $\bar{X} \subseteq \R^n$ where the optimal value of $F$ is attained, that is,
\begin{equation}\label{eq:inf_att}
	\optval_{x \in \bar X} F(x) = \optval_{x \in \R^n} F(x).
\end{equation} 
A finite set $\bar{X} \subseteq \R^n$ of optimizers of \eqref{eq:p} satisfying \eqref{eq:inf_att} is called a \emph{solution} of \eqref{eq:p} \cite{HeyLoe11,LoeSch13}.
In the non-bounded case, finite attainment is still possible, but directions in $\R^n$ and the recession function of $F$ need to be involved in the solution definition \cite{HeyLoe,Loe24}. Note that no such boundedness assumption is required for the methods introduced in this paper.

\begin{ex}\label{ex:1}
	Consider the set $A_1 = \Set{(1,0)^T}+\R^2_+$, $A_2 \Set{(0,1)^T}+\R^2_+$, $A_3= \conv\Set{(1.05,0.05)^T,
	(0.05,1.05)^T}+\R^2_+$, depicted in Figure \ref{fig:1}, and let the set-valued map
	$F:\R^3 \rightrightarrows \R^2$ be given as
	$$ F(x) = \left\{ \begin{array}{cl}
	x_1 A_1 + x_2 A_2 + x_3 A_3 & \text{ if } x_1,x_2,x_3 \geq 0,\; x_1+x_2+x_3=1\\
	\emptyset                   &\text{ otherwise. }
	\end{array}\right.$$
	Then we have
	$$ \optval_{x \in \R^n} F(x) = \conv\Set{(1,0)^T, (0,1)^T}+\R^2_+ .$$
	All unit vectors $e^1, e^2, e^3$ are optimizers, and have the values $F(e^i) = A_i$, $i = 1, 2, 3$.
	The set $\bar{X} = \Set{e^1, e^2}$ is a solution;
	its elements $e^1$ and $e^2$, with values $A_1$ and $A_2$,
	do not provide any flexibility for DM2. The optimizer $e^3$,
	which provides much flexibility, is not necessarily contained in a solution. 
	Thus, this optimizer is not available for the decision maker if a solution as defined above is used.

	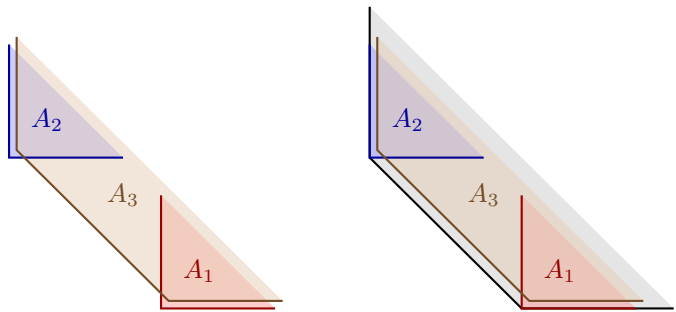
\begin{figure}
		\begin{center}
			\begin{tikzpicture}
				\coordinate (A1) at (2.5, 0.5);
				\coordinate (A11) at (2, 1.5);
				\coordinate (A12) at (2, 0);
				\coordinate (A13) at (3.5, 0);
				\coordinate (B1) at (0.5, 2.5);
				\coordinate (B11) at (1.5, 2);
				\coordinate (B12) at (0, 2);
				\coordinate (B13) at (0, 3.5);
				\coordinate (C1) at (1.5, 1.5);
				\coordinate (C11) at (3.6, 0.1);
				\coordinate (C12) at (2.1, 0.1);
				\coordinate (C13) at (0.1, 2.1);
				\coordinate (C14) at (0.1, 3.6);
				\coordinate (AA13) at (4, 0);
				\coordinate (BB13) at (0, 4);

				\fill[red!40, opacity=.5] (A11) -- (A12) -- (A13) -- cycle;
				\fill[blue!40, opacity=.5] (B11) -- (B12) -- (B13) -- cycle;
				\fill[brown!40, opacity=.5] (C11) -- (C12) -- (C13) -- (C14) -- cycle;
				\draw[red!60!black, thick] (A11) -- (A12) -- (A13);
				\draw[blue!60!black, thick] (B11) -- (B12) -- (B13);
				\draw[brown!60!black, thick] (C11) -- (C12) -- (C13) -- (C14);
				\node[red!60!black]  at (A1) [] {$A_1$};
				\node[blue!60!black] at (B1) [] {$A_2$};
				\node[brown!60!black] at (C1) [] {$A_3$};
			\end{tikzpicture}\hspace{1cm}
			\begin{tikzpicture}
				\coordinate (A1) at (2.5, 0.5);
				\coordinate (A11) at (2, 1.5);
				\coordinate (A12) at (2, 0);
				\coordinate (A13) at (3.5, 0);
				\coordinate (B1) at (0.5, 2.5);
				\coordinate (B11) at (1.5, 2);
				\coordinate (B12) at (0, 2);
				\coordinate (B13) at (0, 3.5);
				\coordinate (C1) at (1.5, 1.5);
				\coordinate (C11) at (3.6, 0.1);
				\coordinate (C12) at (2.1, 0.1);
				\coordinate (C13) at (0.1, 2.1);
				\coordinate (C14) at (0.1, 3.6);

				\fill[gray!40, opacity=.5] (AA13) -- (A12) -- (B12) -- (BB13) -- cycle;
			
				\fill[red!40, opacity=.5] (A11) -- (A12) -- (A13) -- cycle;
				\fill[blue!40, opacity=.5] (B11) -- (B12) -- (B13) -- cycle;
				\fill[brown!40, opacity=.5] (C11) -- (C12) -- (C13) -- (C14) -- cycle;
			
				\draw[thick] (AA13) -- (A12) -- (B12) -- (BB13);
			
				\draw[red!60!black, thick] (A11) -- (A12) -- (A13);
				\draw[blue!60!black, thick] (B11) -- (B12) -- (B13);			
				\draw[brown!60!black, thick] (C11) -- (C12) -- (C13) -- (C14);
			
				\node[red!60!black]  at (A1) [] {$A_1$};
				\node[blue!60!black] at (B1) [] {$A_2$};
				\node[brown!60!black] at (C1) [] {$A_3$};
			\end{tikzpicture}
		\end{center}
		\caption{Left: the three sets defining $F$ in Example \ref{ex:1}.
		Right: the optimal value of $F$ is additionally shown in the rear. The set $A_3$ does not `contribute' to the optimal value.
}\label{fig:1}
	\end{figure}
\end{ex}

Despite this drawback of a solution, in many applications, a solution can provide a good first overview for the decision maker. Therefore, it might make sense to compute a solution before starting the optimizer design process. On the other hand, the method we propose here can also be used to compute solutions. It is preferable if only a part of the solution is interesting for the decision maker as an overview. The design process can also be started directly, allowing the decision maker to explore a finite number of preferred options individually.

We would like to add that computing all optimizers is often impractical because there are usually infinitely many. Presenting those with certain extremal properties, such as those that cannot be expressed as a convex combination of others, is also impractical due to the sheer number of such optimizers in typical problem instances. 

The paper is structured as follows: In Section \ref{sec:od}, we begin with an informal description of the proposed method. Section \ref{sec:theory} presents the core algorithm for an abstract problem setting, along with a demonstration of its key properties, particularly its correctness. In Section \ref{sec:exist}, we address the polyhedral convex problem setting and characterize the existence of optimizers. The finiteness of the method is discussed in Section \ref{sec:finite}. An example that motivates the problem class and illustrates the method is provided in Section \ref{sec:ex}. Finally, implementation issues are discussed in Section \ref{sec:impl}.

\section{Informal description of optimizer design}\label{sec:od}

With the method introduced in this paper, one is able to explore the set of optimizers. The decision maker's task is to successively select points $y^1, \dots, y^k \in \R^q$. The goal is to find an optimizer $x$ of Problem \eqref{eq:p} such that all these points $y^i$ belong to $F(x)$. 

The first point $y^1$ can be chosen by the decision maker arbitrarily from the optimal value of Problem \eqref{eq:p}, that is, from the set
$$ Y_0 \coloneqq \bigcup_{x \in \R^n} F(x)= \Set{y \in \R^q \given \exists x \in \R^n:\; y \in F(x)}.$$
It is important that the decision maker can see all her options when selecting $y^1$ from the set $Y_0$, in order to be able to take into account her preferences. Therefore, the set $Y_0$ must be `visualized' in some way.

If the second point $y^2$ is also chosen arbitrarily from the set $Y_0$, then there does not necessarily exist an $x$ such that $F(x)$ contains both $y^1$ and $y^2$. Therefore, $y^2$ must be chosen from the optimal value $Y_1$ of a modified problem, where the constraint $y^1 \in F(x)$ is added:
$$ Y_1 \coloneqq \Set{y \in \R^q \given \exists x \in \R^n:\; y \in F(x),\; y^1 \in F(x)}.$$
Again, this set needs to be visualized for the decision maker to fully understand all available options and effectively consider her preferences.

At this point, it may happen that the decision maker is not satisfied with the set of available options, which, of course, depends on the choice of $y^1$. Thus, the selection of $y^1$ may need to be revised. Once the decision maker is satisfied with the available options $Y_1$, the point $y^2$ can be selected from $Y_1$. The constraint $y^2 \in F(x)$ needs to be added, and the decision maker then studies the set
$$ Y_2 \coloneqq \Set{y \in \R^q \given \exists x \in \R^n:\; y \in F(x),\; y^1 \in F(x),\; y^2 \in F(x)}.$$
If necessary, the decision maker can now revise the choice of $y^1$ or $y^2$. If satisfied with the available options $Y_2$, she continues to select some point $y^3$ from $Y_2$. This procedure continues in this manner.

The following observations can be made: Adding constraints leads to the same or smaller optimal values. This means that we have
$$ Y_0 \supseteq Y_1 \supseteq Y_2 \supseteq \dots $$
The set $Y_0$ is nonempty whenever the graph of $F$ is nonempty, which is assumed. By induction, it follows that all the constructed sets $Y_i$ are nonempty. Indeed, if $y^{i+1} \in Y_i$, then there exists $x \in \mathbb{R}^n$ such that $y^{i+1} \in F(x)$, $y^1 \in F(x)$, $\dots$, $y^i \in F(x)$, which implies that $Y_{i+1} \neq \emptyset$.

The decision maker's options $Y_i$ are not necessarily realized by a single feasible point $x \in \mathbb{R}^n$, that is,
$$ \not\exists x \in \mathbb{R}^n \text{ such that } F(x) \supseteq Y_i. $$
In this case, the procedure continues. Otherwise, it can be terminated: If for some $k$ there exists $x \in \mathbb{R}^n$ such that $F(x) \supseteq Y_k$, then this $x$ is an optimizer for \eqref{eq:p} that contains the points $y^1, \dots, y^k$ selected by the decision maker.

In the subsequent sections, we will formalize this procedure and prove its correctness and finiteness under certain assumptions.

\section{Theoretical framework and core algorithm}\label{sec:theory}

Let $F: \R^n \rightrightarrows \R^q$ be an arbitrary set-valued mapping. The \emph{domain} of $F$ is defined as
$$ \dom F \coloneqq \Set{x \in \R^n \given F(x) \neq \emptyset}.$$
By the running assumption $\gr F \neq \emptyset$, the domain of $F$ is always nonempty.

The \emph{value function} of $F$ is defined as
\[ 
v_F:2^{\R^q} \to 2^{\R^q},\quad v_F(Y) \coloneqq \Set{z \in \R^q \given \exists x \in \R^n:\; z \in F(x),\; Y \subseteq F(x)}.
\]
It can be expressed equivalently as
\[
   v_F(Y) = \bigcup_{x \in \R^n,\; F(x) \supseteq Y} F(x).
\]
This shows that $v_F(Y)$ is the optimal value of the problem to minimize $F$ subject to the constraint $F(x) \supseteq Y$. 

A subset $Y$ of $\R^q$ is called \emph{fixed point} of the value function $v_F$ if
\begin{equation}\label{eq:fixed_point}
	Y=v_F(Y).
\end{equation}
\begin{prop} \label{prop:vf_fixed_point}
Let $F: \R^n \rightrightarrows \R^q$ be a set-valued map (with nonempty domain) and let $Y \subseteq \R^q$. The following is equivalent:
\begin{enumerate}[(i)]
	\item $Y$ is a fixed point of $v_F$,
	\item There exists an optimizer $\bar x \in \R^n$ of $F$ with $Y=F(\bar x)$.
\end{enumerate}
\end{prop}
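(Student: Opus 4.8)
The plan is to prove both implications by directly unwinding the alternative formula $v_F(Y) = \bigcup_{x \in \R^n,\, F(x) \supseteq Y} F(x)$ against the definition \eqref{eq:optimizer} of an optimizer. No machinery beyond this is needed; the argument is essentially a bookkeeping exercise, with one small subtlety about the empty set.

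\medskip
\noindent\emph{From (ii) to (i).} Suppose $\bar x$ is an optimizer with $F(\bar x) = Y$. Since $\bar x$ trivially satisfies $F(\bar x) \supseteq Y$, it contributes to the union, so $Y = F(\bar x) \subseteq v_F(Y)$. For the reverse inclusion, take any $x$ contributing to the union, i.e.\ with $F(x) \supseteq Y = F(\bar x)$. Optimality of $\bar x$ rules out $F(x) \supsetneq F(\bar x)$, hence $F(x) = F(\bar x) = Y$; therefore $v_F(Y) = \bigcup_{x:\,F(x)\supseteq Y} F(x) \subseteq Y$. Combining the two inclusions yields the fixed point equation \eqref{eq:fixed_point}.

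\medskip
\noindent\emph{From (i) to (ii).} The first step is to rule out $Y = \emptyset$: since $\gr F \neq \emptyset$ by the running assumption, $v_F(\emptyset) = \bigcup_{x \in \R^n} F(x) \neq \emptyset$, so $\emptyset$ is not a fixed point; thus $Y = v_F(Y) \neq \emptyset$. Because a union over an empty index set is empty, this forces the set $\{x \in \R^n : F(x) \supseteq Y\}$ to be nonempty, and we may pick $\bar x$ in it, so $F(\bar x) \supseteq Y$. On the other hand $F(\bar x) \subseteq \bigcup_{x:\,F(x)\supseteq Y} F(x) = v_F(Y) = Y$, so in fact $F(\bar x) = Y$. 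It remains to verify that $\bar x$ is an optimizer: if there were $x' \in \R^n$ with $F(x') \supsetneq F(\bar x) = Y$, then $x'$ would contribute to the union defining $v_F(Y)$, giving $F(x') \subseteq v_F(Y) = Y = F(\bar x)$, which contradicts $F(x') \supsetneq F(\bar x)$. Hence $\bar x$ satisfies \eqref{eq:optimizer} and $F(\bar x) = Y$.

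\medskip
The only point that is not a pure unwinding of definitions is the treatment of the empty set in the direction (i)$\Rightarrow$(ii): one must first observe that a fixed point is automatically nonempty (invoking $\gr F \neq \emptyset$) in order to guarantee that the constraint set $\{x : F(x) \supseteq Y\}$ is nonempty, which is what makes the candidate optimizer $\bar x$ available. I expect this to be the main thing to get right; everything else follows mechanically.
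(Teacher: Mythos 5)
Your proof is correct and follows essentially the same route as the paper's: both directions are obtained by unwinding the definition of $v_F$ against the optimizer condition \eqref{eq:optimizer}, including the same observation that a fixed point must be nonempty because $v_F(\emptyset) \neq \emptyset$ when $\dom F \neq \emptyset$. The only cosmetic difference is that you argue via the union formula with direct double inclusions where the paper argues by contradiction, and you are slightly more explicit in verifying $F(\bar x) = Y$ in the direction (i)$\Rightarrow$(ii).
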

\begin{proof} 	
Let $\bar{x} \in \R^n$ be an optimizer of $F$. Since $\dom F \neq \emptyset$, an optimizer $\bar x$ satisfies $F(\bar{x}) \neq \emptyset$, and there is no $x \in \R^n$ such that $F(x) \supsetneq F(\bar{x})$. From the definition of the value function, we get $F(\bar{x}) \subseteq v_F(F(\bar{x}))$. Assume $Y = F(\bar x)$ is not a fixed point of $v_F$, that is, the latter inclusion is strict. Then there exists $z \in v_F(F(\bar{x}))$ such that $z \notin F(\bar{x})$. By the definition of the value function, there is $x \in \R^n$ such that $z \in F(x)$ and $F(\bar{x}) \subseteq F(x)$. Together, this implies $F(x) \supsetneq F(\bar{x})$, a contradiction.

Conversely, let $Y$ be a fixed point of $v_F$, that is, $Y= v_F(Y)$. Since $F$ has nonempty domain, we have $v_F(\emptyset) = \Set{y \given \exists x \in \R^n:\; y \in F(x)} \neq \emptyset$, thus $Y = v_F(Y) \neq \emptyset$. By the definition of the value function there exists some $\bar x \in \R^n$ such that $Y \subseteq F(\bar x)$. Assume $\bar x$ is not an optimizer of $F$. Then there is some $x \in \R^n$ with $F(x) \supsetneq F(\bar x)$. Using the definition of the value function this implies $v_F(Y) \supsetneq Y$.
\end{proof}
We say that $Y \subseteq \R^q$ \emph{generates a fixed point} of $v_F$ if there is a fixed point $Z \subseteq \R^q$ of $v_F$ satisfying
$$ v_F(Y) = v_F(Z).$$
The following statement is an immediate consequence of Proposition \ref{prop:vf_fixed_point}.

\begin{cor} \label{cor:vf_fixed_point}
Let $F: \R^n \rightrightarrows \R^q$ be a set-valued map (with nonempty domain) and let $Y \subseteq \R^q$. The following is equivalent:
\begin{enumerate}[(i)]
	\item $Y$ generates a fixed point of $v_F$,
	\item There exists an optimizer $\bar x \in \R^n$ of $F$ with $v_F(Y) = F(\bar x)$.
\end{enumerate}	
\end{cor}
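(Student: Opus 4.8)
The plan is to derive the corollary directly from Proposition~\ref{prop:vf_fixed_point} by applying that proposition to the set $v_F(Y)$ rather than to $Y$ itself. The key preliminary remark is that $Y$ generates a fixed point of $v_F$ precisely when $v_F(Y)$ is already a fixed point of $v_F$; once this is in hand, both implications of the corollary follow immediately from the proposition. The proof will therefore consist of two short unwindings of the definition of ``generates a fixed point'', each combined with one direction of Proposition~\ref{prop:vf_fixed_point}.

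For the implication (i)$\Rightarrow$(ii) I would start from a fixed point $Z \subseteq \R^q$ with $v_F(Y) = v_F(Z)$, as supplied by the definition. Since $Z = v_F(Z)$, this gives $v_F(Y) = Z$. Applying Proposition~\ref{prop:vf_fixed_point} to the fixed point $Z$ yields an optimizer $\bar x \in \R^n$ of $F$ with $F(\bar x) = Z = v_F(Y)$, which is exactly (ii).

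For (ii)$\Rightarrow$(i), suppose $\bar x$ is an optimizer of $F$ with $F(\bar x) = v_F(Y)$. The implication (ii)$\Rightarrow$(i) of Proposition~\ref{prop:vf_fixed_point} then says that $F(\bar x)$ is a fixed point of $v_F$. Setting $Z \coloneqq F(\bar x) = v_F(Y)$, we have $v_F(Z) = Z = v_F(Y)$, so $Z$ is a fixed point witnessing that $Y$ generates a fixed point, i.e.\ (i) holds.

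Since both steps only invoke the definitions and the already-proved proposition, I do not expect a genuine obstacle here; the single point that deserves a moment's care is that the whole argument is routed through the auxiliary set $v_F(Y)$, so one may wish to note in passing that $v_F(Y)$ is nonempty (which is automatic from Proposition~\ref{prop:vf_fixed_point}, fixed points being nonempty whenever $\dom F \neq \emptyset$), even though this fact is not strictly needed for the stated equivalence.
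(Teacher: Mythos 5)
Your proof is correct and matches the paper's treatment: the paper gives no explicit proof, declaring the corollary an immediate consequence of Proposition~\ref{prop:vf_fixed_point}, and your argument is exactly the straightforward unwinding that justifies this. The key observation that $Y$ generates a fixed point precisely when $v_F(Y)$ is itself a fixed point, followed by the two applications of the proposition, is the intended route.
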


The \emph{domain} of the value function $v_F$ is the family of sets 
$$\dom v_F \coloneqq \Set{Y \subseteq \R^q \given v_F(Y) \neq \emptyset}.$$
The next proposition shows that one inclusion in \eqref{eq:fixed_point} is closely related to the domain of $v_F$.
\begin{prop} \label{prop:vf_domain}
Let $F: \R^n \rightrightarrows \R^q$ be a set-valued map (with nonempty domain). Then, the domain of the value function $v_F$ is
\[
	\dom v_F = \Set {Y \in 2^{\R^q} \given Y \subseteq v_F(Y)}.
\]
\end{prop}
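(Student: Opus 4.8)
The plan is to establish the two set-theoretic inclusions separately, working throughout from the equivalent ``union'' form $v_F(Y) = \bigcup_{x \in \R^n,\, F(x)\supseteq Y} F(x)$ recorded just above the statement.

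First I would prove $\dom v_F \subseteq \Set{Y \given Y \subseteq v_F(Y)}$. Assume $v_F(Y) \neq \emptyset$. Then, directly from the definition of $v_F$, there exist $z \in \R^q$ and $\bar x \in \R^n$ with $z \in F(\bar x)$ and $Y \subseteq F(\bar x)$; in particular $F(\bar x) \neq \emptyset$. Since $F(\bar x) \supseteq Y$, the set $F(\bar x)$ is one of the sets appearing in the union defining $v_F(Y)$, whence $Y \subseteq F(\bar x) \subseteq v_F(Y)$. This direction requires no case distinction.

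For the reverse inclusion, assume $Y \subseteq v_F(Y)$ and split on whether $Y$ is empty. If $Y \neq \emptyset$, pick any $y \in Y$; then $y \in v_F(Y)$, so $v_F(Y) \neq \emptyset$ and $Y \in \dom v_F$. If $Y = \emptyset$, the hypothesis $Y \subseteq v_F(Y)$ carries no information, so instead I would invoke the running assumption $\gr F \neq \emptyset$, i.e.\ $\dom F \neq \emptyset$: there is some $x \in \R^n$ with $F(x) \neq \emptyset$, and since $\emptyset \subseteq F(x)$ trivially, $F(x)$ contributes to the union, giving $v_F(\emptyset) \supseteq F(x) \neq \emptyset$, that is, $\emptyset \in \dom v_F$.

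The argument is short; the only point that needs care — and essentially the only place the standing hypothesis $\gr F \neq \emptyset$ enters — is the degenerate case $Y = \emptyset$ in the second inclusion, where one cannot produce a witnessing element of $Y$ and must fall back on nonemptiness of $\dom F$. Everything else is an immediate unwinding of the definition of the value function.
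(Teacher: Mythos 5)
Your proof is correct and follows essentially the same route as the paper's: the forward inclusion by exhibiting a witness $F(\bar x) \supseteq Y$ contained in the union defining $v_F(Y)$, and the reverse inclusion by the same case split on $Y = \emptyset$, with the degenerate case handled via $\dom F \neq \emptyset$ exactly as in the paper. The only superficial difference is your use of the union form of $v_F$, which changes nothing of substance.
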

\begin{proof}	
Let \( Y \in \dom v_F \), i.e., \( v_F(Y) \neq \emptyset \). By the definition of the value function, there exist \( z \in \R^q \) and \( x \in \R^n \) such that \( z \in F(x) \) and \( Y \subseteq F(x) \). Thus, again by the definition of $v_F$, every \( y \in Y \) belongs to \( v_F(Y) \).
Conversely, let \( Y \in 2^{\R^q} \) such that \( Y \subseteq v_F(Y) \). If \( Y \) is nonempty, then \( v_F(Y) \) is also nonempty. In the remaining case, when \( Y = \emptyset \), we have
\[
v_F(\emptyset) = \Set{y \in \R^q \given \exists x \in \R^n:\; y \in F(x)}.
\]
This set is nonempty since the domain of \( F \) was assumed to be nonempty.
\end{proof}
We continue by pointing out a monotonicity property of the value function.
\begin{prop} \label{prop:vf_monotone}
The value function of a set-valued map \( F: \R^n \rightrightarrows \R^q \) is inclusion reversing:
	$$ Y \subseteq Z \implies v_F(Y) \supseteq v_F(Z).$$ 	
\end{prop}
\begin{proof}
Let $y \in v_F(Z)$. Then there exists $x \in \R^n$ such that $\Set{y} \cup Z \subseteq F(x)$. The inclusion $Y \subseteq Z$ implies $\Set{y} \cup Y \subseteq F(x)$, whence $y \in v_F(Y)$.
\end{proof}
The results we have proven suggest the idea of enlarging the set 
$Y$ iteratively until, if possible, a fixed point of the value function is obtained. Such a construction requires that we do not leave the domain of the value function. This is guaranteed in the situation described in the next proposition.

\begin{prop} \label{prop:vf_nonempty}
	Let \( F: \R^n \rightrightarrows \R^q \) be a set-valued map, \( Y \subseteq \R^q \), and $z \in v_F(Y)$. Then
	$$ v_F(Y\cup\Set{z})\neq \emptyset.$$
\end{prop}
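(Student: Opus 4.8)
The plan is to unfold the definition of the value function and produce an explicit element of $v_F(Y\cup\Set{z})$. By hypothesis $z \in v_F(Y)$, so by definition of $v_F$ there exists a point $x \in \R^n$ with $z \in F(x)$ and $Y \subseteq F(x)$. The key observation is that these two facts combine to give $Y \cup \Set{z} \subseteq F(x)$, since $\Set{z} \subseteq F(x)$ and $Y \subseteq F(x)$.

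Having secured this single witness $x$, I would then check that the same $x$ certifies membership of $z$ in the new value set: we have $z \in F(x)$ and $Y \cup \Set{z} \subseteq F(x)$, which is precisely the defining condition for $z \in v_F(Y\cup\Set{z})$. Hence $v_F(Y\cup\Set{z}) \neq \emptyset$, as claimed.

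There is essentially no obstacle here; the statement is a direct consequence of the definition, and the only thing to be careful about is keeping track of what the witness $x$ is required to satisfy. As an alternative route one could appeal to Proposition \ref{prop:vf_domain}, which says $\dom v_F = \Set{Y' \given Y' \subseteq v_F(Y')}$: it then suffices to verify $Y \cup \Set{z} \subseteq v_F(Y \cup \Set{z})$, and this again follows from the inclusion $Y \cup \Set{z} \subseteq F(x)$ for the witness $x$ obtained above, since every element of a set contained in some $F(x)$ lies in the corresponding value set. Either way the proof is a one-line application of the definitions.
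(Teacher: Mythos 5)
Your proof is correct and follows essentially the same route as the paper: extract a witness $x$ from $z \in v_F(Y)$, observe $Y \cup \Set{z} \subseteq F(x)$, and conclude that $z$ (indeed every element of $Y \cup \Set{z}$) lies in $v_F(Y \cup \Set{z})$. Nothing further is needed.
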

\begin{proof}
	Since $z \in v_F(Y)$ there exists $x \in \R^n$ such that $z \in F(x)$ and $Y \subseteq F(x)$. Thus we have $Y \cup \Set{z} \subseteq F(x)$, i.e., 
	every element in $Y \cup \Set{z}$ belongs to $v_F(Y \cup \Set{z})$.
\end{proof}

Algorithm \ref{alg:1} uses the mentioned ideas and results to construct an optimizer of $F$. We will prove the correctness of the algorithm. In order to show that the algorithm terminates after finitely many steps, additional structure and further assumptions are necessary. Such a setting will be discussed in the next two sections.

\begin{algorithm}[ht]
\DontPrintSemicolon
\SetKwInOut{Input}{input}\SetKwInOut{Output}{output}
\Input{$F:\R^n \rightrightarrows \R^q$ such that $\dom F \neq \emptyset$}
\Output{minimizer $x$ of $F$}
\Begin{
	$Y \leftarrow \emptyset$\;
	\While{$Y$ does not generate a fixed point of $v_F$}
	{
		decision-making step: choose some $y \in v_F(Y)$\; \label{alg:max:line:4}
		$Y\leftarrow Y \cup \Set{y}$;
	}
	determine some $x \in \R^n$ with  $v_F(Y) = F(x)$
}
\caption{Principle of optimizer design}
\label{alg:1}
\end{algorithm}

\begin{prop} \label{prop:correct}
	Algorithm \ref{alg:1} is correct: If it terminates after finitely many steps then it returns an optimizer $x$ of $F$.
\end{prop}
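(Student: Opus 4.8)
The plan is to verify the two things the algorithm promises at termination: that the final set $Y$ indeed generates a fixed point of $v_F$, and that the point $x$ extracted in the last line is genuinely an optimizer of $F$. The first is essentially tautological from the loop condition, but I must also check that the algorithm never gets stuck earlier — i.e., that each call to the decision-making step is feasible. The second will follow from Corollary~\ref{cor:vf_fixed_point}.

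\textbf{Step 1: the loop maintains $Y \in \dom v_F$.} First I would argue by induction on the iterations that whenever line~\ref{alg:max:line:4} is reached, $v_F(Y) \neq \emptyset$, so that a choice of $y \in v_F(Y)$ is possible. At initialization $Y = \emptyset$, and since $\dom F \neq \emptyset$ we have $v_F(\emptyset) = \bigcup_{x \in \R^n} F(x) \neq \emptyset$, so the first decision-making step is feasible. For the inductive step, suppose at some iteration $v_F(Y) \neq \emptyset$ and the loop body executes, choosing $y \in v_F(Y)$ and updating $Y \leftarrow Y \cup \{y\}$. By Proposition~\ref{prop:vf_nonempty}, $v_F(Y \cup \{y\}) \neq \emptyset$, so the invariant is preserved and the next decision-making step (if the loop continues) is again feasible.

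\textbf{Step 2: termination implies the exit condition is met.} If the algorithm terminates after finitely many steps, then the \textbf{while} loop was exited, which means the current set $Y$ generates a fixed point of $v_F$. By Corollary~\ref{cor:vf_fixed_point}, there exists an optimizer $\bar x \in \R^n$ of $F$ with $v_F(Y) = F(\bar x)$. In particular, the set of $x \in \R^n$ with $v_F(Y) = F(x)$ is nonempty, so the final line of the algorithm is well-defined: it can indeed ``determine some $x \in \R^n$ with $v_F(Y) = F(x)$.''

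\textbf{Step 3: the returned $x$ is an optimizer.} Let $x$ be the point returned, so $v_F(Y) = F(x)$. I claim $x$ is an optimizer of $F$. By Step~2 there is an optimizer $\bar x$ with $F(\bar x) = v_F(Y) = F(x)$; since being an optimizer is a property only of the value $F(\bar x)$ (condition~\eqref{eq:optimizer}), it follows immediately that $x$ is also an optimizer. Alternatively, one can invoke Corollary~\ref{cor:vf_fixed_point} directly: $Y$ generating a fixed point is equivalent to the existence of an optimizer whose value equals $v_F(Y)$, and $F(x) = v_F(Y)$ forces $x$ to be that optimizer (up to the value). This completes the proof.

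I do not expect any serious obstacle here: the proposition is a soundness statement and all the substantive content has been front-loaded into Propositions~\ref{prop:vf_fixed_point}--\ref{prop:vf_nonempty} and Corollary~\ref{cor:vf_fixed_point}. The only point requiring care is Step~1 — making sure the decision-making step in line~\ref{alg:max:line:4} is never vacuous — since a reader might worry that $v_F(Y)$ could become empty after some update; Proposition~\ref{prop:vf_nonempty} is exactly what rules this out.
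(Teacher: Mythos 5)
Your proposal is correct and follows essentially the same argument as the paper: nonemptiness of $v_F(Y)$ is maintained via Proposition~\ref{prop:vf_nonempty} starting from $v_F(\emptyset)\neq\emptyset$, the exit condition hands you an optimizer $\bar x$ with $v_F(Y)=F(\bar x)$ via Corollary~\ref{cor:vf_fixed_point}, and any $x$ with $F(x)=v_F(Y)$ inherits optimality because the optimizer condition depends only on the value $F(x)$. Your version merely spells out these steps in more detail than the paper does.
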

\begin{proof}
	The assumption $\dom F \neq \emptyset$ implies that $v_F(\emptyset) \neq \emptyset$. The set $v_F(Y)$ remains nonempty by Proposition \ref{prop:vf_nonempty}. After the loop (if it is left), $Y$ generates a fixed point of $v_F$. By Corollary \ref{cor:vf_fixed_point} there exists an optimizer $\bar x$ of $F$ such that $v_F(Y) = F(\bar x)$. Then, every $x \in \R^n$ with $v_F(Y) = F(x)$ is also an optimizer of $F$.
\end{proof}

It remains open at this point how the stopping criterion and the computation of $\bar x$ in Algorithm \ref{alg:1} can be realized. In the subsequent sections we show how this is possible for a special setting. 

Finally, let us discuss the potential role of a decision maker in Algorithm~\ref{alg:1}. The points $y$ in the decision-making step can be chosen arbitrarily, which means this task could be performed by a decision maker. Thus, the decision maker can control the construction of an optimizer by deciding which points are added to $Y$. If the algorithm terminates, all points in $Y$ belong to $F(\bar{x})$ for the constructed point $\bar{x}$.

\section{Existence of optimizers}\label{sec:exist}

The algorithm introduced in the previous section computes an optimizer whenever it terminates. Thus, the existence of an optimizer is necessary for the finiteness of the algorithm. In this section, we recall some recent results on the existence of optimizers. In particular, we consider a simplified setting where no explicit order cone is involved.

The \emph{recession function} of a polyhedral convex set-valued map $F:\R^n \rightrightarrows \R^q$ with nonempty domain is defined as
$$ G:\R^n \rightrightarrows \R^q,\quad G(x)= \Set{y \in \R^q \given (x,y) \in 0^+ \gr F},$$
where $0^+ Q$ denotes the recession cone of a convex set $Q$. This means the graph of $G$ is just the recession cone of the graph of $F$.
For $x \in \dom F$ we have (e.g., \cite[Proposition 2]{HeyLoe})
\begin{equation}\label{eq:recFx}
	0^+F(x) = G(0),
\end{equation}
i.e., all nonempty values $F(x)$ have the same recession cone. 
Moreover, the existence of optimizers can be characterized by the recession function.
\begin{prop}\label{prop:ex_opt1}
	Let  $F:\R^n \rightrightarrows \R^q$ be a polyhedral convex set-valued map (with nonempty domain). The following is equivalent:
	\begin{enumerate}[(i)]
		\item There exists an optimizer $\bar x \in \R^n$ of $F$.
		\item For all $\bar y \in \optval_{x \in \R^n} F(x)$ there exists an optimizer $\bar x \in \R^n$ of $F$ such that $\bar y \in F(\bar x)$.
		\item The origin $0 \in \R^n$ is an optimizer of $G$.
	\end{enumerate}
\end{prop}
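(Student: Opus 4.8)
The plan is to establish the chain $\text{(ii)}\Rightarrow\text{(i)}\Rightarrow\text{(iii)}\Rightarrow\text{(ii)}$, which yields all three equivalences. Of these, $\text{(ii)}\Rightarrow\text{(i)}$ is immediate and $\text{(i)}\Rightarrow\text{(iii)}$ is a short computation with recession directions. The implication $\text{(iii)}\Rightarrow\text{(ii)}$ I would reduce to the plain existence statement ``$\text{(iii)}\Rightarrow\text{(i)}$'' by adding the constraint $\bar y\in F(x)$; and $\text{(iii)}\Rightarrow\text{(i)}$ itself --- the only place where the polyhedral structure is genuinely needed --- is the step I expect to be the main obstacle, and the one I would take from the existence theory of \cite{HeyLoe,Loe24} rather than reprove, or else derive from a generator decomposition $\gr F=\conv\Set{(v_i,w_i)\given i\in I}+\cone\Set{(r_j,p_j)\given j\in J}$ with $I,J$ finite.

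For $\text{(ii)}\Rightarrow\text{(i)}$: since $\gr F\neq\emptyset$, the optimal value $\optval_{x\in\R^n}F(x)=\bigcup_{x\in\R^n}F(x)$ is nonempty, so choosing any $\bar y$ in it and applying (ii) already produces an optimizer. For $\text{(i)}\Rightarrow\text{(iii)}$ I argue by contraposition. Assume there is $u\in\R^n$ with $G(u)\supsetneq G(0)$, and let $\bar x\in\dom F$ be arbitrary. For $y\in F(\bar x)$, $p\in G(u)$ and $t\geq0$, the recession-cone property of $\gr F$ gives $(\bar x,y)+t(u,p)\in\gr F$, i.e.\ $F(\bar x+u)\supseteq F(\bar x)+G(u)$; combining $G(u)\supseteq G(0)$ with $G(0)=0^+F(\bar x)$ from \eqref{eq:recFx} yields $F(\bar x+u)\supseteq F(\bar x)$. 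If $\bar x$ were an optimizer, this inclusion would be an equality, whence $F(\bar x)\supseteq F(\bar x)+G(u)$; then, for $p\in G(u)$ and $y\in F(\bar x)\neq\emptyset$, induction gives $y+kp\in F(\bar x)$ for all $k\in\N$, so by closedness and convexity the ray $y+\R_+p$ lies in $F(\bar x)$, forcing $p\in 0^+F(\bar x)=G(0)$ and hence $G(u)\subseteq G(0)$, a contradiction. Since every optimizer of $F$ lies in $\dom F$, this shows $F$ has no optimizer.

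For $\text{(iii)}\Rightarrow\text{(ii)}$: fix $\bar y\in\optval_{x\in\R^n}F(x)=\bigcup_{x\in\R^n}F(x)$ and put $P\coloneqq\Set{x\in\R^n\given(x,\bar y)\in\gr F}$, a nonempty polyhedron. Let $\tilde F$ be the polyhedral convex set-valued map with $\gr\tilde F\coloneqq\gr F\cap(P\times\R^q)$ (this merely encodes the constraint $\bar y\in F(x)$), and let $\tilde G$ be its recession function. Then $\gr\tilde G=0^+\gr F\cap 0^+(P\times\R^q)=\gr G\cap(0^+P\times\R^q)$, so $\tilde G(u)=G(u)$ for $u\in 0^+P$, $\tilde G(u)=\emptyset$ otherwise, and $\tilde G(0)=G(0)$. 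By (iii) there is no $u\in\R^n$ with $G(u)\supsetneq G(0)$, hence none in $0^+P$, and $\tilde G(u)=\emptyset\not\supsetneq G(0)$ otherwise; thus $0$ is an optimizer of $\tilde G$, i.e.\ (iii) holds for $\tilde F$. Applying ``$\text{(iii)}\Rightarrow\text{(i)}$'' to $\tilde F$ gives an optimizer $\bar x$ of $\tilde F$. Then $\bar x\in P$, so $\bar y\in F(\bar x)$, and no $x\in P$ satisfies $F(x)\supsetneq F(\bar x)$. Finally, any $x\in\R^n$ with $F(x)\supsetneq F(\bar x)$ obeys $\bar y\in F(\bar x)\subseteq F(x)$, hence $x\in P$; so $\bar x$ is in fact an optimizer of $F$ with $\bar y\in F(\bar x)$. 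As $\bar y$ was arbitrary, (ii) follows.

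The remaining and hardest step is ``$\text{(iii)}\Rightarrow\text{(i)}$'', where the recession hypothesis must be converted into the \emph{genuine} existence of a $\subseteq$-maximal value $F(\bar x)$. Writing $\gr F=\Set{(x,y)\given Ax+By\geq b}$, the candidates are the polyhedra $F(x)=\Set{y\given By\geq b-Ax}$, affinely parametrised by their right-hand sides and all sharing the recession cone $G(0)$; one has to show that this family possesses a maximal element under inclusion --- a compactness-type assertion that is false without condition (iii) --- and the natural route is the generator decomposition of $\gr F$ together with a careful choice of $\bar x$ combining the point-generators $v_i$ with recession directions of $\dom F$. Since precisely this existence theory for polyhedral convex set optimization has already been developed in \cite{HeyLoe,Loe24}, I would invoke it here rather than repeat the argument.
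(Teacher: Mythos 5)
Your proposal is correct, but it takes a genuinely different route from the paper, which disposes of the entire proposition in one line by citing it as the special case $C=\Set{0}$ of Proposition 3.2 in \cite{Loe}. You instead prove (ii)$\Rightarrow$(i) and (i)$\Rightarrow$(iii) from scratch --- your recession argument for (i)$\Rightarrow$(iii), establishing $F(\bar x+u)\supseteq F(\bar x)+G(u)$ and then forcing $G(u)\subseteq 0^+F(\bar x)=G(0)$ via the ray argument in the closed convex set $F(\bar x)$, is sound --- and you reduce (iii)$\Rightarrow$(ii) to the bare existence statement (iii)$\Rightarrow$(i) by restricting $F$ to the polyhedron $P=F^{-1}(\bar y)$ and verifying that condition (iii) is inherited by the restricted map $\tilde F$; the identity $0^{+}\bigl(\gr F\cap(P\times\R^q)\bigr)=\gr G\cap(0^{+}P\times\R^q)$ you use there is legitimate because the intersection is nonempty. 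That reduction is a genuine addition: the paper gives no argument for why the pointwise attainment in (ii) follows from plain existence. The one step you defer, (iii)$\Rightarrow$(i), is precisely the step the paper also imports wholesale from the literature, so no gap is created relative to the paper's own standard of rigor; the only quibble is that the sharpest reference for that existence step is \cite{Loe} (Proposition 3.2 with $C=\Set{0}$) rather than \cite{HeyLoe,Loe24}, though the surrounding attainment theory does appear in all three. What your approach buys is transparency about which part of the equivalence actually requires the polyhedral existence machinery and which parts are elementary; what the paper's citation buys is brevity and uniformity with the general ordering-cone setting.
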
 
\begin{proof}
	This is a special case of \cite[Proposition 3.2]{Loe} for the ordering cone $C=\Set{0}$.
\end{proof}
The \emph{natural ordering cone} \cite{Loe_natcon} of a polyhedral convex set-valued map $F:\R^n \rightrightarrows \R^q$ with nonempty domain, and with the recession function denoted by $G$, is defined as
$$ K\coloneqq K_F \coloneqq \Set{y \in \R^q \given \exists x \in \R^n:\; y \in G(x),\, 0 \in G(x)}.$$
We have from \cite[Proposition 3.2]{Loe_natcon} that
\begin{equation}\label{eq:noc}
	G(0) \subseteq K \subseteq \bigcup_{x \in \R^n} G(x),
\end{equation}
where both inclusions can be strict, see \cite[Example 3.3]{Loe_natcon}. The natural ordering cone can be used to characterize the existence of optimizers.

\begin{prop}\label{prop:ex_opt2}
	Let  $F:\R^n \rightrightarrows \R^q$ be a polyhedral convex set-valued map with nonempty domain. Then the three equivalent statements in Proposition \ref{prop:ex_opt1} are satisfied if and only if 
	\begin{equation}\label{eq:opt_cond}
		G(0) = K.
	\end{equation}
\end{prop}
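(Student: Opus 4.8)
The plan is to prove Proposition~\ref{prop:ex_opt2} by combining Proposition~\ref{prop:ex_opt1} with a careful analysis of the recession function $G$ and the natural ordering cone $K$. The crucial fact to exploit is the equivalence with statement (iii) of Proposition~\ref{prop:ex_opt1}: there exists an optimizer of $F$ if and only if $0 \in \R^n$ is an optimizer of $G$. So the whole task reduces to showing that $0$ is an optimizer of $G$ if and only if $G(0) = K$. Since $G$ is itself a polyhedral convex set-valued map (its graph is $0^+\gr F$, a polyhedral cone), and since by \eqref{eq:recFx} applied to $G$ all nonempty values $G(x)$ share the recession cone $0^+G(0) = G(0)$ (note $G$ is its own recession map, as the recession cone of a cone is the cone itself), we can work entirely within this conical setting.

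The first step is to unpack what it means for $0$ to be an optimizer of $G$: by \eqref{eq:optimizer}, this says there is no $x \in \R^n$ with $G(x) \supsetneq G(0)$. The second step is to relate this to $K$. By definition, $K = \Set{y \given \exists x:\; y \in G(x),\ 0 \in G(x)}$, and by \eqref{eq:noc} we always have $G(0) \subseteq K$. For the forward direction, suppose $G(0) = K$ fails, i.e. there is $y^* \in K \setminus G(0)$; then there is some $x^*$ with $y^* \in G(x^*)$ and $0 \in G(x^*)$. The plan here is to show $G(x^*) \supsetneq G(0)$, which contradicts optimality of $0$. Containment $G(0) \subseteq G(x^*)$ should follow because $0 \in G(x^*)$ together with the conical/recession structure: for any $z \in G(0)$ and the point $x^*$ with $0 \in G(x^*)$, one expects $z \in G(x^*)$ as well — this is essentially the statement that adding a recession direction at a feasible point keeps you feasible, formalized via $0^+ G(x^*) = G(0)$ and $0 \in G(x^*)$ giving $G(0) = 0 + G(0) \subseteq G(x^*)$. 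Strictness is immediate since $y^* \notin G(0)$. Hence $0$ is not an optimizer of $G$, proving the contrapositive.

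For the converse direction, assume $G(0) = K$ and suppose for contradiction that $0$ is not an optimizer of $G$, so there is $x^*$ with $G(x^*) \supsetneq G(0)$. The plan is to derive that $K \supsetneq G(0)$, contradicting the hypothesis. Pick $y^* \in G(x^*) \setminus G(0)$. The subtlety — and what I expect to be the main obstacle — is that membership $y^* \in G(x^*)$ does not by itself put $y^*$ into $K$, because the definition of $K$ additionally requires $0 \in G(x^*)$, which need not hold for the particular $x^*$ witnessing non-optimality. To overcome this I would argue as follows: since $G(x^*) \neq \emptyset$, pick any $w \in G(x^*)$; then $G(x^*) = w + (G(x^*) - w)$ and, using polyhedral convexity together with $G(0) = 0^+G(x^*)$, one can show $G(x^*) - w \supseteq$ (a translate landing at) $G(0)$, and more usefully that the "conical part" accessible from $x^*$ lies in $\bigcup_x G(x)$; then a direction argument using that $G$ has a polyhedral cone as graph lets us pass from $x^*$ to a scaled point or to $0$ with the needed property. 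Concretely, I would try to show directly that $y^* - w \in K$ for suitable $w \in G(x^*) \cap G(0)$ — such a $w$ exists since $G(0) \subseteq G(x^*)$ is the content of the non-strict part of $G(x^*) \supsetneq G(0)$ — exploiting that if $0 \in G(0) \subseteq G(x^*)$ then the pair $(x^*, y^*-w+?)$ can be adjusted to satisfy both conditions in the definition of $K$. If this direct route proves delicate, the cleanest fallback is to observe that everything is positively homogeneous (graphs are cones), invoke \cite[Proposition 3.2]{Loe_natcon} or the cited \cite[Proposition 3.2]{Loe} machinery behind Proposition~\ref{prop:ex_opt1} at the level of $G$, and note that optimality of $0$ for $G$ together with $G(0) \subseteq \bigcup_x G(x)$ forces the reverse inclusion $\bigcup_x G(x) \cap (\text{relevant part}) \subseteq G(0)$, squeezing $K$ between two copies of $G(0)$ via \eqref{eq:noc}. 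The main technical care throughout is bookkeeping the three sets $G(0)$, $K$, and $\bigcup_x G(x)$ and remembering that $K$ carries the extra constraint $0 \in G(x)$, which is exactly what makes the equivalence non-trivial.
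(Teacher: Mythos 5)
Your forward direction is correct and is essentially the paper's argument: from $y^*\in K\setminus G(0)$ you get $x^*$ with $y^*\in G(x^*)$ and $0\in G(x^*)$, and then $G(0)=0^+G(x^*)$ together with $0\in G(x^*)$ gives $G(0)\subseteq G(x^*)$, strictly because $y^*\notin G(0)$.

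The converse direction, however, contains a genuine gap, and it stems from a misidentified ``obstacle.'' You worry that the $x^*$ witnessing $G(x^*)\supsetneq G(0)$ need not satisfy $0\in G(x^*)$, and you then sketch a workaround (translating by some $w\in G(x^*)\cap G(0)$ and claiming $y^*-w\in K$) that is neither carried out nor justified --- nothing in the definition of $K$ supports $y^*-w\in K$, and the ``fallback'' is only an appeal to cited machinery. But the obstacle is illusory: $G(0)$ is a recession cone (namely $0^+F(x)$ for any $x\in\dom F$, equivalently the slice of the cone $0^+\gr F$ over $x=0$), so $0\in G(0)$ automatically. Hence $G(x^*)\supsetneq G(0)$ immediately gives $0\in G(0)\subseteq G(x^*)$, and any $y^*\in G(x^*)\setminus G(0)$ then lies in $K$ directly by the definition of $K$ applied to the pair $(x^*,y^*)$, yielding $K\supsetneq G(0)$. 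This one-line observation is exactly how the paper closes this direction; without it, your proposed detour does not constitute a proof.
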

\begin{proof}
	Let \eqref{eq:opt_cond} be violated. Because of \eqref{eq:noc} there exists some $y \in K \setminus G(0)$. By the definition of $K$, there exists $x \in \R^n$ such that $y \in G(x)$ and $0 \in G(x)$. From \eqref{eq:recFx} (applied to $G$) we get $G(x) = G(x) + G(0)$. Thus $0 \in G(x)$ implies $G(0) \subseteq G(x)$. Since $y \not\in G(0)$, this inclusion is strict. Thus, $0$ is not an optimizer of $G$.
	
Conversely, assume that $0$ is not an optimizer of $G$. Then there is some $x \in \R^n$ such that $G(x)\supsetneq G(0)$. From $0 \in G(0)$ we get $0 \in G(x)$. Taking some $y \in G(x)\setminus G(0)$, we have $y \in K \setminus G(0)$, i.e., \eqref{eq:opt_cond} is violated. 
\end{proof}

Note that Proposition \ref{prop:ex_opt2} can also be obtained as a special case of \cite[Corollary 3.5]{Loe_natcon}.

\section{Finiteness of the algorithm}\label{sec:finite}

In this section we provide sufficient conditions under which the optimizer design method of Algorithm \ref{alg:1} in Section \ref{sec:theory} is finite and thus outputs an optimizer. First we assume polyhedral convexity of the objective mapping $F$. Secondly, we suppose that an optimizer exists, which can be expressed by Condition \eqref{eq:opt_cond} in Section \ref{sec:exist}. The third and last assumption is a requirement to the decision-making process. 
The decision-making steps---that is, the choice of \( y \in v_F(Y) \) in line \ref{alg:max:line:4} of Algorithm \ref{alg:1}---are required to be \emph{qualified} in the following sense, where finitely many exceptions are possible:
\begin{quote}
	\emph{The chosen point $y$ must belong to a minimal (with respect to $\subseteq$) face $S$ of the convex polyhedron $v_F(Y)$ such that $S \cap Y = \emptyset$.}
\end{quote}
The easiest way to illustrate this condition is to consider the particular case where \( v_F(\emptyset) = \optval_{x \in \R^n} F(x)\) has a vertex. In this case, the minimal faces of the sets \( v_F(Y) \) are their vertices. Thus, the decision maker only needs to choose a vertex of \( v_F(Y) \) that has not been selected before.

Before we prove finiteness of the algorithm, we show some properties of the value function.
A \emph{P-representation} (where P stands for `projection') of a polyhedral convex set $P$ is a representation of the form
$$ P = \{ y \in \mathbb{R}^q \mid \exists x \in \mathbb{R}^n : A x + B y \geq b \} $$
for matrices $A \in \R^{m \times n}$, $B \in \R^{m \times q}$ and a vector $b \in \R^m$.
If $P$ is nonempty, its recession cone can be expressed as
$$ 0^+ P = \{ y \in \mathbb{R}^q \mid \exists x \in \mathbb{R}^n : A x + B y \geq 0 \}, $$
see, e.g., \cite{CirLoeWei19}.

\begin{prop} \label{prop:polyh} Let $F : \R^n \rightrightarrows \R^q$ be a polyhedral convex set-valued mapping. Then for every finite subset $Y \subseteq \R^q$, the value $v_F(Y)$ of the value function $v_F$ is a convex polyhedron. In case $v_F(Y)$ is nonempty, its recession cone the natural ordering cone $K$ of $F$, that is, $0^+ v_F(Y) = K$.
\end{prop}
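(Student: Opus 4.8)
The plan is to work with a P-representation of $\gr F$ and push the quantifiers around. Since $F$ is polyhedral convex, its graph admits a representation $\gr F = \{(x,y) \in \R^n\times\R^q \mid Ax + By \ge b\}$ for suitable matrices $A$, $B$ and vector $b$. Fix a finite set $Y = \{y^1,\dots,y^k\} \subseteq \R^q$. By definition, $v_F(Y) = \bigcup_{x:\, F(x)\supseteq Y} F(x)$, which means $z \in v_F(Y)$ if and only if there exists $x \in \R^n$ with $(x,z) \in \gr F$ and $(x,y^i) \in \gr F$ for all $i = 1,\dots,k$. Writing this out, $z \in v_F(Y)$ iff there exists $x$ satisfying the single system of linear inequalities obtained by stacking $Ax + Bz \ge b$ together with $Ax + By^i \ge b$ for each $i$. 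Since the $y^i$ are fixed, the terms $By^i$ are constant vectors, so this is a system of the form $\widetilde A x + \widetilde B z \ge \widetilde b$ with $\widetilde B$ consisting of one copy of $B$ and $k$ zero blocks. Hence $v_F(Y)$ is exactly the set of $z$ admitting such an $x$, i.e.\ a P-representation, and therefore a convex polyhedron. This handles the first claim.

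For the recession cone, I would invoke the formula for the recession cone of a P-representation recalled just before the proposition: if $v_F(Y)$ is nonempty, then $0^+ v_F(Y) = \{ z \mid \exists x:\ \widetilde A x + \widetilde B z \ge 0\}$. Unpacking $\widetilde A$, $\widetilde B$, the right-hand side is $\{ z \mid \exists x:\ Ax + Bz \ge 0,\ Ax \ge 0 \}$ — the $k$ copies of the constraint involving $y^i$ all collapse to the same condition $Ax \ge 0$ once the constant terms vanish. Now $\{(x,y) \mid Ax + By \ge 0\}$ is precisely $0^+\gr F = \gr G$, the graph of the recession function $G$. So the condition "$\exists x:\ (x,z)\in\gr G$ and $(x,0)\in\gr G$" is exactly the defining condition for $z$ to lie in the natural ordering cone $K = K_F$. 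Therefore $0^+ v_F(Y) = K$.

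The one point requiring a little care is the step that collapses the $k$ copies of the recession constraint for the $y^i$ to the single constraint $Ax \ge 0$, and that the P-representation recession formula applies: the formula $0^+P = \{y \mid \exists x: Ax + By \ge 0\}$ from \cite{CirLoeWei19} holds for any P-representation of a nonempty polyhedron, so no extra hypothesis is needed beyond $v_F(Y)\neq\emptyset$. I expect the main (minor) obstacle is just being careful that the "$\exists x$" in the P-representation is genuinely the same $x$ linking the constraint on $z$ with the constraints forcing $Y \subseteq F(x)$ — this is exactly what the definition of $v_F$ provides, so it goes through cleanly. Note also that finiteness of $Y$ is used precisely to keep the stacked system finite, hence polyhedral.
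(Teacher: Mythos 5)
Your proposal is correct and follows essentially the same route as the paper: both express $v_F(Y)$ as the P-represented set $\Set{z \given \exists x:\ (x,z),(x,y^1),\dots,(x,y^k)\in\gr F}$ and then apply the recession-cone formula for P-representations from \cite{CirLoeWei19}, under which the constant terms vanish and the $k$ stacked blocks collapse to the condition $0\in G(x)$, yielding $0^+v_F(Y)=K$. The only difference is that you write out the matrices $\widetilde A,\widetilde B$ explicitly where the paper keeps the argument at the level of $\gr F$ and $0^+\gr F$; the content is identical.
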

\begin{proof} 
	For $Y=\Set{y^1,\dots,y^k}$, we have 
	$$v_F(Y)=\Set{y\in \R^q \given \exists x \in \R^n:\; (x,y), (x,y^1), \dots, (x,y^k) \in \gr F},$$
	 which is a P-represented polyhedral convex set, whenever the graph of $F$ is polyhedral convex. If the set $v_F(y)$ is nonempty, its recession cone is obtained as
	\begin{align*}
		0^+ v_F(Y) &= \Set{y\in \R^q \given \exists x \in \R^n:\; (x,y), (x,0), \dots, (x,0) \in 0^+ \gr F} \\
		           &= \Set{y\in \R^q \given \exists x \in \R^n:\; y \in G(x),\, 0 \in G(x)}. 
	\end{align*}
	Thus, we have $0^+ v_F(Y) = K$.
\end{proof}

Now we are ready to prove the main result.

\begin{thm}\label{thm:1}
	Algorithm \ref{alg:1} is correct and finite if the following three conditions are satisfied:
	\begin{enumerate}[(i)]
		\item Polyhedral convex problem setting: $F$ is polyhedral convex with nonempty domain.
		\item Existence of optimizers: The natural ordering cone $K$ equals $G(0)$.
		\item Qualified decision-making: The decision-making steps in line \ref{alg:max:line:4} are qualified with at most finitely many exceptions.
	\end{enumerate}
\end{thm}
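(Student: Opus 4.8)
The plan is to keep the correctness part --- Proposition~\ref{prop:correct} --- untouched and to show that, under (i)--(iii), the while-loop of Algorithm~\ref{alg:1} executes only finitely many times. Since at most finitely many decision-making steps are unqualified, we may relabel the run so that it starts just after the last unqualified step; the state reached there is a finite set $Y$, the assumption $\dom F\neq\emptyset$ still holds, and we may henceforth assume that \emph{every} step is qualified. By Propositions~\ref{prop:vf_nonempty} and~\ref{prop:polyh}, every set $Y$ occurring in the loop lies in $\dom v_F$, so $v_F(Y)$ is a nonempty convex polyhedron with $0^+ v_F(Y)=K$; by Proposition~\ref{prop:vf_monotone} the polyhedra $v_F(Y)$ form a decreasing chain as the loop runs.

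The first step I would carry out is a dichotomy for a single qualified step $Y\mapsto Y'=Y\cup\{y\}$, where $y$ lies in a minimal face $S$ of $v_F(Y)$ with $S\cap Y=\emptyset$. Distinct minimal faces of a polyhedron are pairwise disjoint, so $S$ is the unique minimal face of $v_F(Y)$ containing $y$, and it was not met by $Y$. Since $v_F(Y')\subseteq v_F(Y)$ always holds, the step is of exactly one of two types: \emph{either} $v_F(Y')\subsetneq v_F(Y)$, \emph{or} $v_F(Y')=v_F(Y)$ and the number of minimal faces of this unchanged polyhedron that are met by the current set strictly increases.

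The second step is to show that a ``saturated'' set terminates the loop: if every minimal face of $P:=v_F(Y)$ meets $Y$, then $Y$ generates a fixed point of $v_F$. This is where condition (ii) enters. Pick $T\subseteq Y$ containing one point of each minimal face of $P$; since the minimal faces of $P$ are the cosets of its lineality space $K\cap(-K)$ and $0^+P=K$ by Proposition~\ref{prop:polyh}, one has $P=\conv T+K$. For any $x$ with $Y\subseteq F(x)$ we then have $T\subseteq F(x)$ and, by \eqref{eq:recFx} together with condition (ii), $0^+F(x)=G(0)=K$; hence $F(x)\supseteq\conv T+K=P$, and since also $F(x)\subseteq v_F(Y)=P$, we get $F(x)=v_F(Y)$. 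One checks directly that such an $x$ is then an optimizer, so Corollary~\ref{cor:vf_fixed_point} gives that $Y$ generates a fixed point; hence the loop condition fails at the next test and the algorithm stops.

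It remains to show that the chain of polyhedra $v_F(Y)$ strictly decreases only finitely often, and this is the step I expect to be the main obstacle. Granting it, the proof concludes quickly: from some step on the chain is constant, equal to a polyhedron $P^\ast$; by the dichotomy every subsequent step is of the second type, so the number of minimal faces of $P^\ast$ met by the current set strictly increases at each step; since $P^\ast$ has only finitely many minimal faces, the set becomes saturated after finitely many more steps and the loop exits by the second step above, whence Proposition~\ref{prop:correct} delivers an optimizer. To establish the missing claim I would argue on the support side, tracking the decreasing polyhedra $X_Y:=\{x\in\R^n\mid Y\subseteq F(x)\}$ (each cut out of $\gr F$ by finitely many affine constraints determined by the chosen points), and try to attach to every \emph{qualified} strict decrease of $v_F(Y)$ a strict decrease of a discrete invariant of $X_Y$ bounded by the finite face lattice of $\gr F$ --- with qualification being precisely what excludes the degenerate possibility that $X_Y$ keeps shrinking along a fixed face of $\gr F$ without any combinatorial change. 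Isolating the correct invariant and proving its strict decrease is the technical core of the argument.
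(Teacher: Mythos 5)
Your overall architecture (dichotomy between a strict decrease of $v_F(Y)$ and an increase in the number of met minimal faces, plus the saturation argument via $P=\conv T+K$ and condition (ii)) is sound as far as it goes, but the proof has a genuine gap that you yourself flag: you never establish that the chain $v_F(Y^{(0)})\supseteq v_F(Y^{(1)})\supseteq\dots$ strictly decreases only finitely often. A decreasing chain of polyhedra can of course be infinite, so this is not a formality, and the invariant on $X_Y=\{x\mid Y\subseteq F(x)\}$ that you propose to track is not identified, let alone shown to decrease. Without that step the finiteness claim is unproven. The paper's proof shows that this obstacle can be bypassed entirely by a persistence lemma which is the piece your dichotomy is missing: if $S$ is a minimal face of $v_F(Y^{(k)})$ with $S\cap Y^{(k)}\neq\emptyset$, then $S$ \emph{remains} a minimal face of $v_F(Y^{(l)})$ meeting $Y^{(l)}$ for every $l>k$ --- also across strict decreases of $v_F$. (The argument: $S=\{y\}+L$ with $y\in Y^{(k)}\subseteq Y^{(l)}\subseteq v_F(Y^{(l)})$ and $L$ the common lineality space, so $S\subseteq v_F(Y^{(l)})$; since $S$ is an exposed face of the larger polyhedron $v_F(Y^{(k)})$, it stays the optimal set of the same linear program over the smaller one.) Hence each qualified step permanently consumes a fresh minimal face, and it only remains to bound the number of minimal faces of $v_F(Y^{(k)})$ \emph{uniformly in $k$}; the paper gets this from the P-representation of $v_F(Y^{(k)})$ having $2m$ inequalities independent of $k$ (after taking componentwise minima of the right-hand sides $b-By^i$), so that Fourier--Motzkin elimination yields an H-representation with at most $\kappa=4((2m)/4)^{2^n}$ inequalities. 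You use only that a single fixed polyhedron has finitely many minimal faces, which does not suffice once $v_F(Y)$ is allowed to change.

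There is a second, smaller omission. You declare the correctness part settled by Proposition \ref{prop:correct}, but under hypothesis (iii) one must also show that a \emph{qualified} choice is always available whenever the loop condition holds; otherwise the theorem's hypotheses could be unsatisfiable mid-run. The paper proves this by setting $Z\coloneqq\conv Y+G(0)$, noting $v_F(Y)=v_F(Z)$ and $Z\subseteq v_F(Z)$, so that $Z\subsetneq v_F(Y)$ when $Y$ does not generate a fixed point; since by (ii) both sets have recession cone $G(0)=K$, a point $y\in v_F(Y)\setminus Z$ can be taken in a minimal face $S$ of $v_F(Y)$, and $S\cap Y=\emptyset$ because otherwise $y$ would lie in $Y+L\subseteq Z$. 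Your saturation argument contains the germ of this (it is essentially the contrapositive), but you do not draw the conclusion that a qualified step exists whenever the loop body is entered.
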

\begin{proof} Concerning the correctness of the algorithm, in view of Proposition \ref{prop:correct}, it remains to show that a qualified decision-making step is always possible within the algorithm. The decision making step in line \ref{alg:max:line:4} is executed only if $Y$ does not generate a fixed point of $v_F$. Thus, there is no fixed point $Z$ of $v_F$ with $v_F(Y)=v_F(Z)$. 	
Define $Z\coloneqq \conv Y + G(0)$. Then we have $v_F(Y)=v_F(Z)$, which follows from the definition of $v_F$ and the fact that $G(0)$ is the recession cone of $F(x)$ for every $x\in \dom F$. We conclude that $Z$ is not a fixed point of $v_F$. Since $v_F(Y)$ constructed in the algorithm is always nonempty, Proposition \ref{prop:vf_domain} yields the inclusion $Z \subseteq v_F(Z)$. Together we obtain $Z \subsetneq v_F(Y)$, i.e., there exists $y \in v_F(Y) \setminus Z$. The assumption $K= G(0)$ yields that $v_F(Y)$ and $Z$ both have the same recession cone $G(0)$. Therefore, without loss of generality, $y$ can assumed to belong to a minimal face $S$ of $v_F(Y)$. We show that $S \cap Y = \emptyset$. Assuming that there is some $v \in S \cap Y$. Then there is some $l$ in the lineality space $L=-G(0)\cap G(0)$ of $v_F(Y)$ such that $y = v + l$. This implies
$y = v + l \in Y + L \in \conv Y + G(0) = Z$, which is a contradiction. This proves the correctness of the algorithm under the specification in (iii).

To show that the algorithm is finite, it remains to prove the following two statements:
\begin{enumerate}[(a)]
	\item If $S$ a minimal face of $v_F(Y^{(k)})$ with $S \cap Y^{(k)} \neq \emptyset$ in some iteration $k$ of the algorithm, then the same $S$ is also a minimal face of $v_F(Y^{(l)})$ with $S \cap Y^{(l)} \neq \emptyset$ in any later iteration $l>k$ of the algorithm.
	\item There is an upper bound $\kappa$ on the number of minimal faces of $v_F(Y^{(k)})$, which is independent of $k$.
\end{enumerate} 
Condition (a) means that the same minimal face cannot be used twice in a qualified decision making step. As a consequence, the second statement (b) means that at most $\kappa$ qualified decision making steps are possible. Thus the algorithm must be finite. 

To prove (a), let $S$ be a face of $v_F(Y^{(k)})$. Since $v_F(Y^{(k)})$ is a convex polyhedron by Proposition \ref{prop:polyh}, $S$ is an exposed face, i.e., $S$ is the set of optimal solutions of some linear program with feasible set $v_F(Y^{(k)})$. For $l>k$ we have $Y^{(l)} \supseteq Y^{(k)}$ and, by Proposition \ref{prop:vf_monotone}, $v_F(Y^{(l)}) \subseteq v_F(Y^{(k)})$. Since $S$ is a minimal face of $v_F(Y^{(k)})$ and, by assumption, $S \cap Y^{(k)} \neq \emptyset$, it is of the form $S = \Set{y} + L$, where  $y \in Y^{(k)}$ and $L$ being the lineality space of $v_F(Y^{(k)})$. We conclude that $y \in Y^{(l)}$. Since $L$ is also the lineality space of $v_F(Y^{(l)})$, and $Y^{(l)} \subseteq v_F(Y^{(l)})$ by Proposition \ref{prop:vf_nonempty}, we conclude that $S \subseteq v_F(Y^{(l)})$. Thus $S$ remains the set of optimal solutions of the mentioned linear program if its feasible set $v_F(Y^{(k)})$ is replaced by $v_F(Y^{(l)}) \subseteq v_F(Y^{(k)})$. Thus $S$ is also a face of $v_F(Y^{(l)})$. Because of its form $S = \Set{y} + L$, it is a minimal one.

To prove (b), we note that an H-representation of the graph of $F$ exists. It can be expressed as
$$ \gr F = \Set{(x,y) \in \R^n \times \R^q \given A x + B y \leq b}.$$
For $Y^{(k)}=\Set{y^1,\dots,y^k}$, this provides a representation
$$ v_F(Y^{(k)}) = \Set{y \in \R^q \given \exists x \in \R^n:\, A x + B y \leq b, A x \leq b - B y^1, \dots, A x \leq b - B y^k }.$$
Let $\bar b$ be the component-wise minimum of the vectors $b - B y^1, \dots, b - B y^k$. Then we can write
$$ v_F(Y^{(k)}) = \Set{y \in \R^q \given \exists x \in \R^n:\, A x + B y \leq b, A x \leq \bar b}.$$
Let $m$ denote the number of rows of $A$, then the latter P-representation of $v_F(Y^{(k)})$ has $2m$ inequalities. This number is independent of $k$, i.e., it is the same in each iteration of the algorithm. Fourier-Motzkin elimination can be used to compute an $H$-representation of $v_F(Y^{(k)})$, which has at most $\kappa \coloneqq 4((2m)/4)^{2^n}$ inequalities. Then $\kappa$ is an upper bound for the number of minimal faces $S$ which can be used in qualified decision making steps.
\end{proof}

\section{Example: network supply capacity design} \label{sec:ex}

In this section, we discuss a potential application of polyhedral convex set optimization in order to motivate the problem setting and to illustrate the decision-making method. 
We consider a network flow problem defined by a directed graph \( G = (V, E) \), where \( V \) is the set of nodes and \( E \) is the set of arcs.
For each $e \in E$, there are specified costs \( c_e \in \mathbb{R}_+ \) per unit of flow and a flow capacity \( u_e \in \mathbb{R}_+ \). Denoting by $x_e$ the variables representing the flow along an arc $e \in E$, we have the capacity constraints
\begin{equation}
\label{constraint_1}
 0 \leq x_e \leq u_e, \quad e \in E.
\end{equation}
The \emph{net flow} at some node $v \in V$ is defined as
\[
\netflow(x;v) \coloneqq \sum_{\substack{u \in V \\ uv \in E}} x_{uv} - \sum_{\substack{u \in V \\ vu \in E}} x_{vu}.
\]
We assume that the set \( V \) of nodes is partitioned into the set \( V^\text{supply} \) of supply nodes, where the net flow is expected to be non-positive, and the set \( V^\text{demand} \) of demand nodes, where the net flow must be positive.
For simplicity, we do not explicitly consider nodes with zero net flow.
For each demand node \( v \in V^\text{demand} \), we assume that there is a fixed positive demand \( b_v > 0 \).
This can be expressed by the linear constraints
\begin{equation}
\label{constraint_2}
\netflow(x;v) = b_v, \quad v \in V^\text{demand}.
\end{equation}
Another task in the optimization process is determining the supply capacities.
To achieve this, we introduce variables \( z_v \) for \( v \in V^\text{supply} \) and add the constraints
\begin{equation}
\label{constraint_3}
0 \leq -\netflow(x;v) \leq z_v, \quad v \in V^\text{supply}.
\end{equation}
These constraints are linear in the variables \( x_e \) and \( z_v \). 
To establish one unit of supply capacity at a node \( v \in V^\text{supply} \) incurs a cost of \( a_v \).
There are two types of costs: the operating costs
\[
c(x) \coloneqq \sum_{e \in E} c_e x_e
\]
for running the network, and the establishment costs
\[
a(z) \coloneqq \sum_{v \in V^\text{supply}} a_v z_v
\]
for setting up supply capacities.

Our first objective is to minimize the total costs
\[
f_1(z, x) \coloneqq c(x) + a(z).
\]
So far, we have a slightly generalized min-cost-flow problem.
Another objective relates to the stability of the network, interpreted as the aim to avoid fully utilizing the capacities of the arcs. If a specific percentage \( \tau \in (0,1) \) of the respective capacity is exceeded, this excess will be accumulated as follows, where \( w^+ \coloneqq \max\{0, w\} \):
\[
s(x) \coloneqq \sum_{e \in E} (x_e - \tau u_e)^+.
\]
Additionally, we seek to avoid fully utilizing the supply capacities. 
If the supply, defined as the negative of the net flow, exceeds a specific percentage \( \mu \in (0,1) \) of the respective capacity \( z_v \), the excess will be accumulated as follows:
\[
t(z, x) \coloneqq \sum_{v \in V^\text{supply}} (-\netflow(x;v) - \mu z_v)^+.
\]
The second objective function, which quantifies the degree of instability in the network, is a weighted sum of both cumulative excesses:
\[
f_2(z, x) \coloneqq \gamma_1 s(x) + \gamma_2 t(z, x),
\]
where \( \gamma_1, \gamma_2 \geq 0 \) are the respective weights.
Note that the minimization of a term \( w^+ \) can be replaced by minimizing an auxiliary variable \( v \) subject to the constraints \( v \geq 0 \) and \( v \geq w \). Therefore, the following can be seen as a multi-objective linear program, where we set \( f = (f_1, f_2)^T \):
\leqnomode	
	\begin{gather}\label{eq:nscd1}\tag{NSCD}
		\min_{z, x} f(z, x) \quad \text{subject to} \quad \eqref{constraint_1}, \eqref{constraint_2}, \eqref{constraint_3}.
	\end{gather}
\reqnomode
This problem is inherently multi-stage in nature, involving two distinct stages. In typical applications, supply capacities are fixed initially, followed by decisions on how to operate the network later. Alternatively, the problem can be viewed as requiring a one-time fixation of supply capacities while accommodating multiple operational scenarios. This creates a \emph{preference for flexibility} \cite{Kreps79}, suggesting that the choice of supply capacities should maximize the flexibility in operating the network. To address this, the first-stage decision should be based on a corresponding set linear program
\leqnomode	
	\begin{gather}\label{eq:nscd2}\tag{NSCD'}
		\opt_z F(z),
	\end{gather}
\reqnomode
where the set-valued objective map $F$ is defined by
\[
F(z) \coloneqq \{ y \in \mathbb{R}^2 : \exists x \in \mathbb{R}^{|E|} \text{ such that } y \geq f(z,x), \, \eqref{constraint_1}, \, \eqref{constraint_2}, \, \eqref{constraint_3} \}.
\]
The set linear program \eqref{eq:nscd2} can be seen as a multi-stage variant of the multi-objective linear program \eqref{eq:nscd1}.

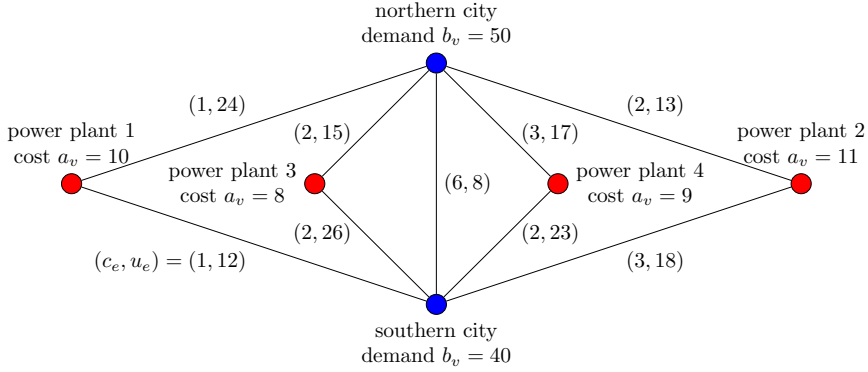
\begin{figure}[hpt]
	\begin{center}
	\scalebox{.8}{ 
	\begin{tikzpicture}
	  \node[draw, circle, fill=red, minimum size=3mm, 
	  label={[align=center]above:{power plant 1 \\ cost $a_v=10$}}] (n1) at (180:6) {};
	  \node[draw, circle, fill=red, minimum size=2mm, 
	  label={[align=center]above:{power plant 2 \\ cost $a_v=11$}}] (n2) at (0:6) {};
	  \node[draw, circle, fill=red, minimum size=2mm, 
	  label={[align=center]left:{power plant 3 \\ cost $a_v=8$}}] (n3) at (180:2) {};
	  \node[draw, circle, fill=red, minimum size=2mm, 
	  label={[align=center]right:{power plant 4 \\ cost $a_v=9$}}] (n4) at (0:2) {};
	  \node[draw, circle, fill=blue, minimum size=2mm, 
	  label={[align=center]above:{northern city \\ demand $b_v = 50$}}] (nn) at (90:2) {};
	  \node[draw, circle, fill=blue, minimum size=2mm, 
	  label={[align=center]below:{southern city \\ demand $b_v = 40$}}] (ns) at (270:2) {};
           
	  \draw (n1) -- node[pos=0.5, above left] {$(1,24)$} (nn);
	  \draw (n1) -- node[pos=0.5, below left] {$(c_e,u_e)=(1,12)$} (ns);
	  \draw (n2) -- node[pos=0.5, above right] {$(2,13)$} (nn);
	  \draw (n2) -- node[pos=0.5, below right] {$(3,18)$} (ns);
	  \draw (n3) -- node[pos=0.4, left] {$(2,15)\;$} (nn);
	  \draw (n3) -- node[pos=0.4, left] {$(2,26)\;$} (ns);
	  \draw (n4) -- node[pos=0.4, right] {$\;(3,17)$} (nn);
	  \draw (n4) -- node[pos=0.4, right] {$\;(2,23)$} (ns);
	  \draw (nn) -- node[pos=0.5, right] {$(6,8)$} (ns);
	\end{tikzpicture}
	}
	\end{center}
	\caption{The digraph and the data of Example \ref{ex:energy}.}
	\label{fig:energy1}
\end{figure}
	
\begin{ex} \label{ex:energy} 	
Consider the electricity network depicted in Figure \ref{fig:energy1}. The $k=2$ demand nodes represent two districts of a city. The capacities of the $l=4$ power plants, which are the supply nodes, need to be determined. The digraph $G$ has $n=6$ nodes and $m=9$ arcs. The costs $c_e$ of transporting one unit of energy along an interconnection wire $e \in E$, the capacities $u_e$ of the interconnection wires, the energy demand $b_v$ in the two districts, and the costs $a_v$ for establishing one unit of power plant capacity at the four supply nodes are given in Figure \ref{fig:energy1}. 
The decision process and its result for the parameters $\tau = 0.8$, $\mu = 0.9$, $\gamma_1 = 1$, and $\gamma_2 = 3$ are depicted in Figure \ref{fig:energy2}, which can be explained as follows: 

The current finite set \( Y \)---the points selected by the decision-maker---is displayed by red circles.
 An orange set represents the value $v_F(Y) = F(z)$ for some optimizer $z$. The set $v_F(Y)$ is displayed in yellow color if there is no $z$ such that $F(z) \supseteq v_F(Y)$. In this case, the set \( v_F(Y) \) represents the options available to the decision-maker for selecting the next point, which is intended to be contained within the value $F(z)$ of an optimizer $z$ that will be constructed.
The eight pictures in Figure \ref{fig:energy2}, numbered row-wise from left to right, can be described as follows:

(1) Initially, we have $Y = \emptyset$. Therefore, all points of the optimal value can be chosen. The yellow set $v_F(\emptyset)$ of options coincides with the optimal value $\optval_{z} F(z)$, which is displayed in gray in the background.

(2) The decision maker chooses the point $y^1 \coloneqq (1071, 0)^T$, which is the vertex of the optimal value with the best network stability. This leads to the optimizer $z^1 \coloneqq (22.2, 5.8, 36.4, 35.6)^T$. The components of $z^1$ represent the supply capacities of the four power plants. The orange set $v_F(\Set{y^1}) = F(z^1)$ shows the options for second-stage decisions, which relate to how the network can be operated. The total supply capacity is $100.0$.

(3) The decision-maker deletes the previously selected point $y^1$ because it leads to relatively high costs for all options of operating the network.
Once again, all points from the optimal value are available for selection.

(4) The decision-maker selects the point $y^2 \coloneqq (1075, 3.5)$, which is in the interior of the optimal value set. This selection does not yet result in an optimizer. Another point must be chosen from the yellow set $v_F(\Set{y^2})$, which displays the options for a second selection.

(5) The decision maker selects $y^3 \coloneqq (1045,11.3)$ as a second point. This selection results in the optimizer $z^2 \coloneqq (29.6, 0.0, 36.4, 32.7)$, which has a total supply capacity of $98.7$. The set $v_F(\Set{y^2,y^3})=F(z^2)$ is displayed in orange.

(6) The decision-maker is still not satisfied. The previously selected point $y^2$ is deleted, resulting in new options $v_F(\Set{y^3})$ being displayed.

(7) The new second point $y^4 \coloneqq (1035, 14.4)$ is chosen by the decision-maker, but this selection does not yet lead to an optimizer.

(8) The decision-maker selects $y^5 \coloneqq (1064, 6.1)$ as a third point. This results in the optimizer $z^3 \coloneqq (28.4, 0.0, 36.4, 32.8)$, with a total supply capacity of $97.6$. The decision-maker is satisfied with this optimizer, and the process is complete.

One can observe that the second and third optimizers identified in the process provide more flexibility in operating the network than the first optimizer. This is why the decision-maker forgoes Pareto-optimal options.
\end{ex}

\def\ww{.52} 
\def\www{1.04} 
\begin{figure}
    \begin{center}
        \begin{minipage}{\www\textwidth}
            \includegraphics[width=\ww\textwidth]{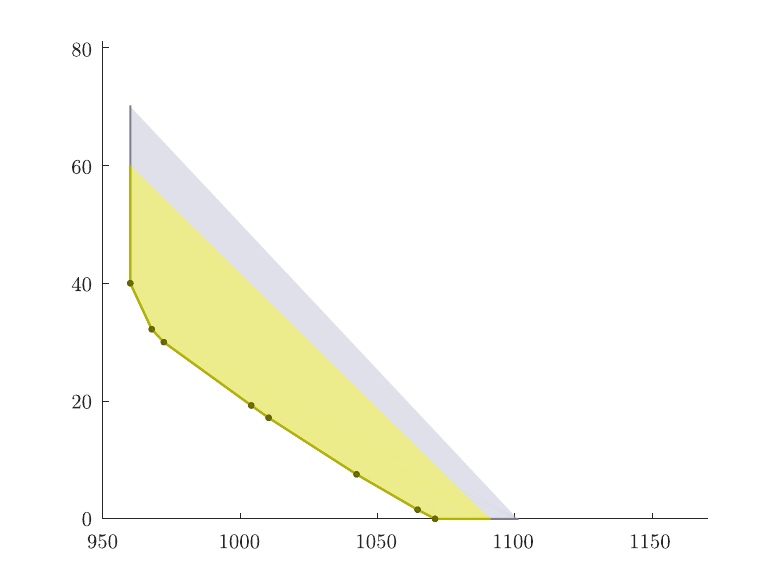}
            \includegraphics[width=\ww\textwidth]{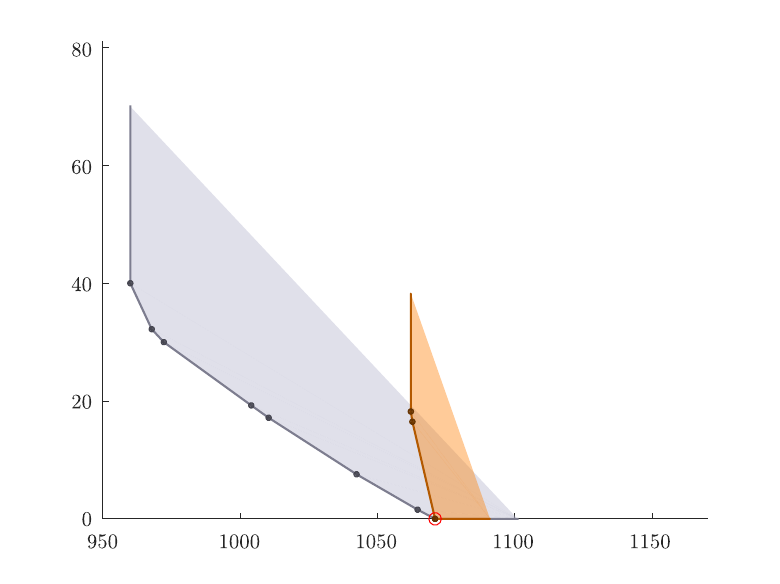}
        \end{minipage}%
        \hfill 
        \begin{minipage}{\www\textwidth}
            \includegraphics[width=\ww\textwidth]{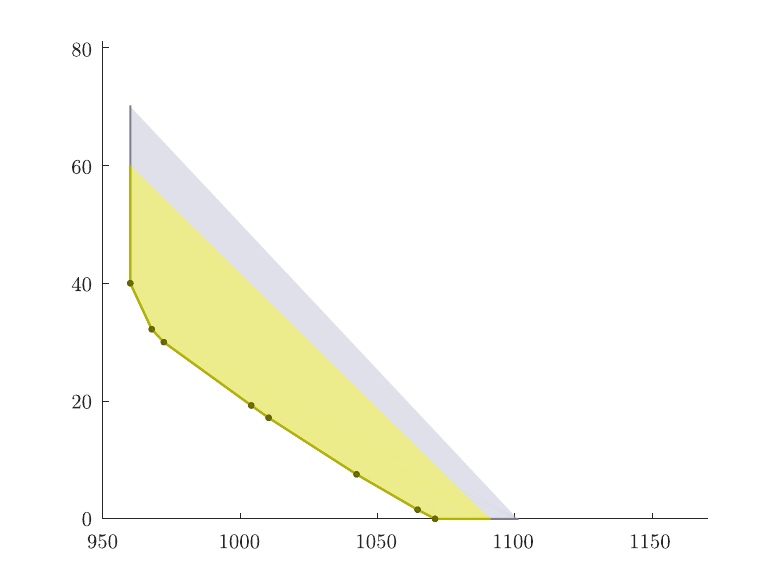}
            \includegraphics[width=\ww\textwidth]{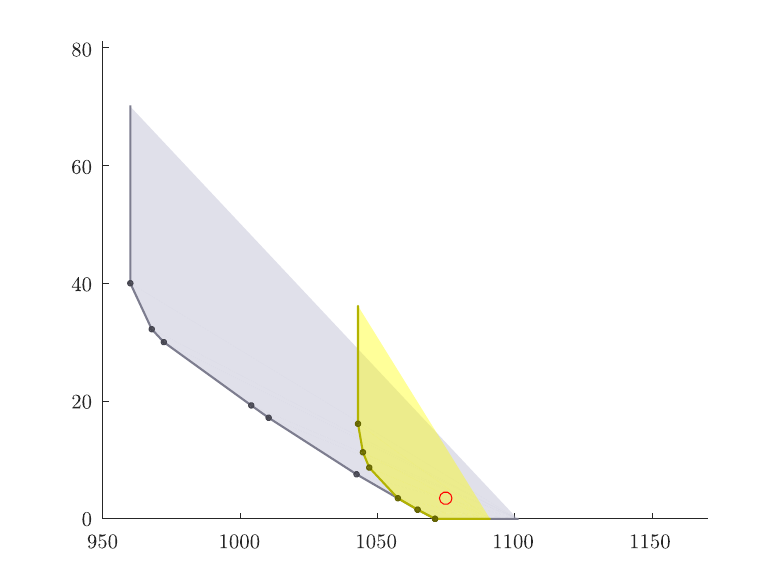}
        \end{minipage}%
        \hfill
        \begin{minipage}{\www\textwidth}
            \includegraphics[width=\ww\textwidth]{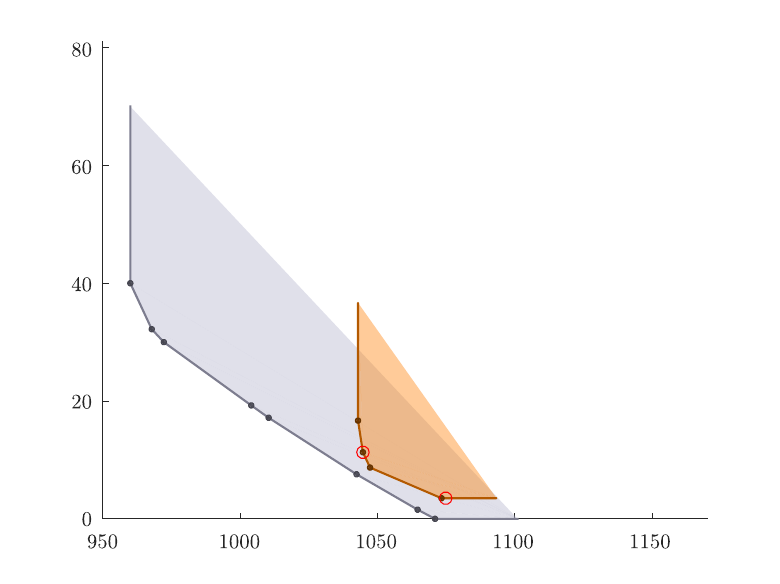}
            \includegraphics[width=\ww\textwidth]{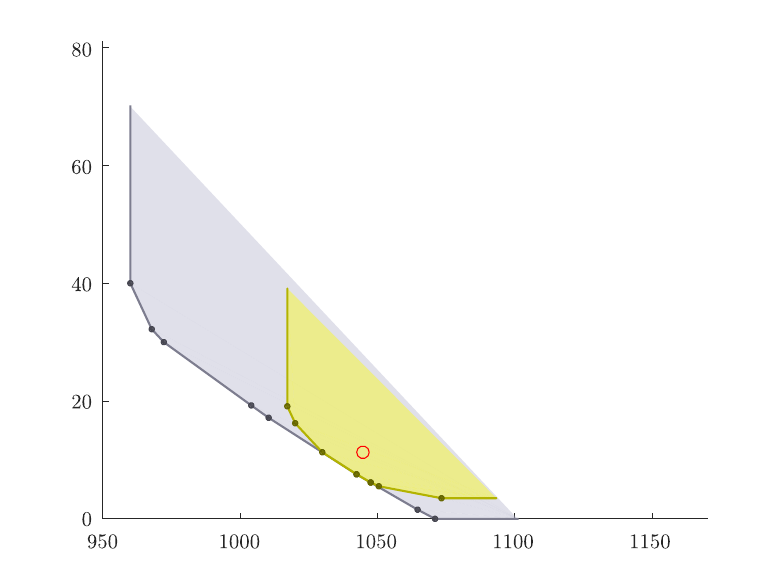}
        \end{minipage}%
        \hfill
        \begin{minipage}{\www\textwidth}
            \includegraphics[width=\ww\textwidth]{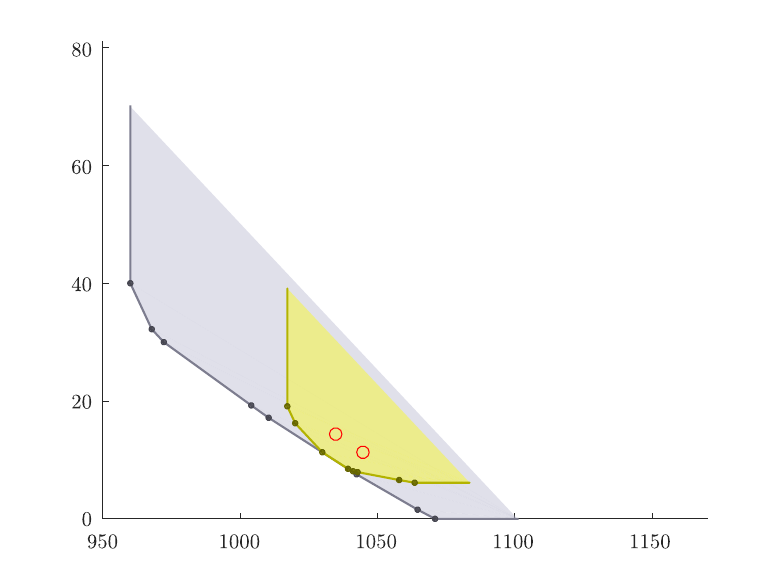}
            \includegraphics[width=\ww\textwidth]{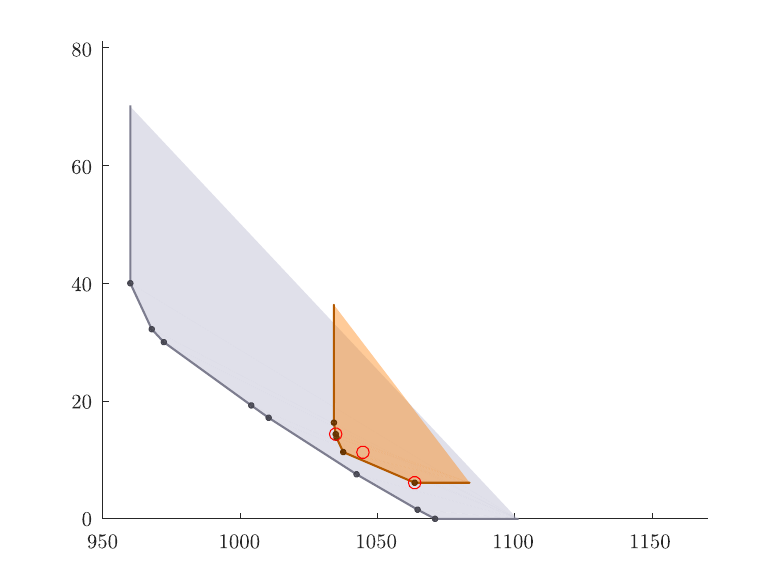}
        \end{minipage}
    \end{center}
    \caption{Decision process from Example \ref{ex:energy}. }\label{fig:energy2}
\end{figure}

\section{Implementation}\label{sec:impl}

In this section, we discuss the two primary operations of the proposed method: the computation of $v_F(Y)$ for finite sets $Y$, and the stopping condition of the loop, i.e., computing $x$ such that $F(x) \supseteq v_F(Y)$ or establishing that no such $x$ exists. For this, we assume that $F$ is polyhedral convex with a nonempty domain, and that optimizers exist, i.e., $K = G(0)$.

Polyhedral calculus, as discussed in \cite{CirLoeWei19}, asserts that the result of operations such as the Cartesian product of finitely many convex polyhedra, the intersection of finitely many convex polyhedra, and an orthogonal projection of a convex polyhedron can be easily represented using P-representations, provided the input polyhedra are given in P-representation form. In order to compute a V-representation and an H-representation of a P-represented convex polyhedron, a polyhedral projection problem, or equivalently, a multiple objective linear program must be solved \cite{LoeWei16}. Although polyhedral calculus remains tractable for high-dimensional convex polyhedra, the computational effort required for polyhedral projection typically increases exponentially with the polyhedron's dimension. Both polyhedral calculus and polyhedral projection can be realized using the software \texttt{bensolve tools} \cite{bt}, which was employed for the computations in Example \ref{ex:energy}.

If as usual $F$ is given by its graph, for some convex polyhedron $X \subseteq \R^n$, we have

$$ \optval_{x \in X} F(x) \coloneqq \bigcup_{x \in X} F(x) = \Pi_{-q} (\gr F \cap (X \times \R^q)),$$
where, for a set $S \subseteq \R^n \times \R^q$, $\Pi_{-q}(S)$ denotes the orthogonal projection to the last $q$ coordinates of $\R^n \times \R^q$.
Likewise, for some $y \in \R^q$ we have
$$ F^{-1}(y) \coloneqq \Set{x \in \R^n \given y \in F(x)} = \Pi_n(\gr F \cap (\R^n \times \Set{y})),$$ 
and $\Pi_{n}(S)$ denotes the orthogonal projection to the first $n$ coordinates. 
Using this, the values of $v_F$ at a finite set $Y=\Set{y^1,\dots,y^k}$ can be composed as
$$ v_F(Y) = F\left(\bigcap_{i=1}^k F^{-1}(y^i)\right).$$
Thus, a P-representation of $v_F(Y)$ can be easily obtained using polyhedral calculus, while both a V-representation and an H-representation of $v_F(Y)$ can be derived through polyhedral projection in $\mathbb{R}^q$, where $q$ denotes the number of objectives in our problem. In many applications, this dimension $q$ is low, typically ranging from $q=2$ to $q=4$.

Let \( v_F(Y) = \{v^1,\dots,v^\ell\} + K \) be a V-representation of \( v_F(Y) \), where \( K \) denotes the natural ordering cone introduced in Section \ref{sec:exist}, which was shown to be the recession cone of $v_F(Y)$ in Proposition \ref{prop:polyh}. By polyhedral calculus, we obtain a P-representation of the convex polyhedron 
\[
M \coloneqq \bigcap_{i=1}^\ell F^{-1}(v^i).
\]
Note that \( x \in M \) implies \( \{v^1,\dots,v^\ell\} \subseteq F(x) \). Using the assumption \( K = G(0) \), from \( x \in M \) we conclude 
\[
v_F(Y) = \{v^1,\dots,v^\ell\} + K \subseteq F(x) + K = F(x) + G(0) = F(x).
\]
The equivalence 
\[
x \in M \quad \iff \quad F(x) \supseteq v_F(Y)
\]
is now straightforward to see. This means that the optimality test can be performed by computing a V-representation of \( v_F(Y) \), obtaining a P-representation of \( M \) via polyhedral calculus, and solving a linear program to determine some \( x \in M \) or to establish that \( M \) is empty.


\end{document}